\newtheorem{thm}{Theorem}[section]
\newtheorem{lem}[thm]{Lemma}
\newtheorem{cor}[thm]{Corollary}
\theoremstyle{definition}
\newcommand{\ceil}[1]{\left\lceil#1\right\rceil}
\let\al=\alpha
\let\dt=\delta
\let\ep=\epsilon
\let\vph=\varphi
\let\abs=\envert
\let\sq=\sqrt
\let\wt=\widetilde
\theoremstyle{remark}
\begin{document}
\title{Explicit Chen's theorem\footnote{2010 Mathematics 
Subject Classification:
11N35.}
\footnote{Key words and phrases: Linear sieve, Rosser-Iwaniec sieve.}}
\author{Tomohiro Yamada}
\date{}
\maketitle

\begin{abstract}
We show that every even number $>\exp\exp 36$ can be represented
as the sum of a prime and a product of at most two primes.
\end{abstract}

\section{Introduction}\label{intro}

In a letter of 1742 to Euler, Goldbach conjectured that every integer greater then $2$
is the sum of three primes {\it including $1$}, which is equivalent that
every even integer $N\geq 4$ is the sum of two primes (not including $1$) or of the form $p+3$ with $p$ prime.

Euler replied that this is equivalent to the statement
that every even integer $N\geq 4$ is the sum of two primes.

An weaker conjecture is that every odd integer $N\geq 7$ can be represented as the sum of three primes.
Vinogradov\cite{Vin1}\cite{Vin2}\cite[Chapter 8]{Nat} showed that every sufficiently large odd integer can be represented as the sum of three primes.
His student K. Borozdin\cite{Bor} proved that $3^{3^{15}}$ is large enough.
Chen and Wang\cite{CW1} reduced the constant to $\exp\exp 11.503$,
Chen and Wang\cite{CW2} to $\exp\exp 9.715$ and Liu and Wang\cite{LW} to $\exp 3100$.
Deshouillers, Effinger, te Riele and Zinoviev\cite{DERZ} showed that
the Generalized Riemann Hypothesis gives the weaker conjecture.
Recently, Harald Helfgott claimed to have proved three prime conjecture unconditionally.

Contrastly, the ordinary Goldbach's conjecture is still unsolved.
A well-known partial result is the theorem of Chen\cite{Che1}\cite{Che2},
who proved that every sufficiently large even number can be represented as the sum of a prime
and the product of at most two primes.  Ross\cite{Ros} gave a simpler proof.
Nathanson\cite[Chapter 10]{Nat} gave another proof based on
Iwaniec's unpublished lecture note.  However, they did not give an explicit constant
above which every even number can be represented as $p+P_2$.
The purpose of this paper is to give an explicit constant for Chen's theorem;
every even number $>\exp\exp 36$ can be represented as the sum of a prime and
a product of at most two primes.
Indeed, we shall prove the following result:

\begin{thm}\label{th1}
Let $\pi_2(N)$ denote the number of representations of a given integer $N$ as the sum of a prime
and a product of at most two primes.
If $N$ is an even integer $>\exp\exp 36$, then we have
\begin{equation}
\pi_2(N)>\frac{0.007U_N N}{\log^2 N},
\end{equation}
where
\begin{equation}
U_N=2e^{-\gamma}\prod_{p>2}\left(1-\frac{1}{(p-1)^2}\right)\prod_{p>2, p\mid N}\frac{p-1}{p-2}.
\end{equation}
\end{thm}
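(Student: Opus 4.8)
The plan is to run Chen's weighted sieve together with his switching principle, keeping all constants and error terms explicit. Fix a sifting level $z=N^{1/u}$ with $u$ near $8$ (Chen's choice, which places the linear sieve where its characteristic functions $f,F$ have the simple closed forms) and put $\mathcal{A}=\{N-p:2<p<N\}$. Since $N$ is even every element of $\mathcal{A}$ is odd, and for squarefree odd $d$ coprime to $N$ one has $|\mathcal{A}_{d}|=\#\{2<p<N:p\equiv N\ (\mathrm{mod}\ d)\}$; thus $\mathcal{A}$ is a linear ($1$-dimensional) sieve problem with density $\omega$ given by $\omega(p)=p/(p-1)$ for $p\nmid 2N$ and $\omega(p)=0$ for $p\mid 2N$, and a Mertens-type computation yields $V(z):=\prod_{p<z}(1-\omega(p)/p)\sim U_{N}/\log z$ (the at most $u$ prime factors of $N$ exceeding $z$ being harmless), which is exactly how the factor $U_{N}$ of the Theorem appears. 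Following Chen, attach to each prime $p<N$ with $P^{-}(N-p)\ge z$ (here $P^{-}(m)$ is the least prime factor of $m$) the weight
\begin{equation}\label{plan:w}
\begin{split}
w(N-p)=1&-\tfrac12\,\#\{q\mid N-p:\ z\le q<N^{1/3},\ P^{-}((N-p)/q)\ge z\}\\
&-\tfrac12\,\#\{(q_{1},q_{2},q_{3}):\ N=p+q_{1}q_{2}q_{3},\ z\le q_{1}<N^{1/3}\le q_{2}\le q_{3}\}.
\end{split}
\end{equation}
A short case analysis according to the number of prime factors of $N-p$ counted with multiplicity shows $w(N-p)\le 1$ always, and $w(N-p)\le 0$ as soon as this number is at least three: since $N-p<N$, at least one prime factor is below $N^{1/3}$; if exactly one is, the unique admissible factorisation makes the third term equal to $1$; and if two or more are (automatic once there are four or more) the second term already gives $w\le 0$. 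Hence every $N-p$ of positive weight is a $P_{2}$, and summing \eqref{plan:w} over the admissible $p$ gives
\begin{equation}\label{plan:main}
\pi_{2}(N)\ \ge\ \Omega_{1}-\tfrac12\Omega_{2}-\tfrac12\Omega_{3},\qquad \Omega_{1}=S(\mathcal{A},z),\quad \Omega_{2}=\sum_{z\le q<N^{1/3}}S(\mathcal{A}_{q},z),
\end{equation}
with $\Omega_{3}$ the triple count in \eqref{plan:w}.

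Next I would estimate the three pieces. For $\Omega_{1}$, apply the Rosser--Iwaniec lower-bound linear sieve at a level of distribution $D=N^{\theta}$ with $\theta$ as close to $1/2$ as an effective Bombieri--Vinogradov-type mean-value theorem allows (treating a possible Siegel zero by the standard two-case device), and with $u$ adjusted so that $s=\log D/\log z$ lands in $[2,4]$: $\Omega_{1}\ge X V(z)\,(f(s)-\mathrm{err})$ with $X=\mathrm{li}(N)$ and $f(s)=2e^{\gamma}s^{-1}\log(s-1)$, the admissible remainder $\sum_{d\le D}|r_{d}|$, $r_{d}=|\mathcal{A}_{d}|-\omega(d)d^{-1}X$, being bounded by that effective estimate. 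For $\Omega_{2}$, run the upper-bound sieve on each $\mathcal{A}_{q}$ (here $|\mathcal{A}_{q}|\sim\mathrm{li}(N)/(q-1)$ and, since $q\ge z$, $V_{q}(z)=V(z)$), giving $S(\mathcal{A}_{q},z)\le\mathrm{li}(N)(q-1)^{-1}V(z)(F(s_{q})+\mathrm{err})$ with $F(s)=2e^{\gamma}/s$ and $s_{q}=\log(D/q)/\log z$, and convert $\sum_{q}(q-1)^{-1}F(s_{q})$ into an explicit integral by partial summation and the prime number theorem, producing a constant times $U_{N}N/\log^{2}N$. For $\Omega_{3}$, invoke Chen's switching principle: $\Omega_{3}$ is the number of triples $(q_{1},q_{2},q_{3})$ in the ranges of \eqref{plan:w} with $q_{1}q_{2}q_{3}<N$ and $N-q_{1}q_{2}q_{3}$ prime; bounding ``prime'' by ``$P^{-}\ge z$'', fixing $q_{1}$ and letting the largest prime $q_{3}$ run over its interval turns this into the sieve $S(\{N-q_{1}q_{2}q_{3}\},z)$, and the constraints $z\le q_{1}<N^{1/3}\le q_{2}\le q_{3}$, $q_{1}q_{2}q_{3}<N$ are exactly tight enough that the level of distribution of this sequence stays $\gtrsim z^{2}$, so the upper-bound sieve with Bombieri--Vinogradov yields another explicit multiple of $U_{N}N/\log^{2}N$.

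Substituting the three estimates into \eqref{plan:main}, factoring out $U_{N}N/\log^{2}N$, and optimising over the free exponents exactly as Chen does leaves a positive absolute constant in front; the remaining task is to check numerically that, once the \emph{explicit} sieve remainders, the \emph{explicit} Bombieri--Vinogradov error, and the effective forms of the prime number theorem and of $V(z)\log z\to U_{N}$ are inserted, this constant still exceeds $0.007$ provided $\log\log N>36$. This verification is the real content of the theorem. The three main terms nearly cancel, so Chen's surplus is small, and every error term---the Rosser--Iwaniec remainder, the prime-counting error $r_{d}$ on average (whose effective control is the most delicate ingredient), and the loss in the switched sieve, where the level of distribution is already at its limit---must be driven below that surplus; tracking how slowly these tend to zero is what forces the threshold $\exp\exp 36$ (no attempt being made to optimise it). A subsidiary point I expect to cost genuine work is establishing the claimed level of distribution for the switched sequence uniformly over all admissible $(q_{1},q_{2})$, together with the handful of thin-set reductions any careful evaluation needs (for instance discarding triples with $q_{1}q_{2}q_{3}$ within $N^{2/u}$ of $N$ and handling the endpoints of the $q$-sums).
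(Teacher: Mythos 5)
Your overall framework---Chen's weights, the decomposition $\pi_2(N)\ge \Omega_1-\frac{1}{2}\Omega_2-\frac{1}{2}\Omega_3$ with $z=N^{1/8}$, $y=N^{1/3}$, the explicit Rosser--Iwaniec linear sieve fed by an explicit Bombieri--Vinogradov-type mean value theorem, and the switching principle for the triple count---is exactly the paper's (it is Nathanson's treatment of Chen's theorem made explicit). The genuine gap is hidden in the phrase ``treating a possible Siegel zero by the standard two-case device'' in your estimate of $\Omega_1$. The two cases are: exceptional modulus $k_0$ small, or $k_0$ large. When $k_0$ is small (in the paper, $k_0<\log^{7/5}x_2$), the effective Liu--Wang bound $\beta_0\le 1-\pi/(0.4923\,k_0^{1/2}\log^2 k_0)$ makes $N^{\beta_0-1}$ negligible and the zero is simply absorbed into the error term. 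But when $k_0$ is large (up to $\log^{10}x_2$), no effective bound forces $N^{\beta_0-1}$ to be small---Siegel's theorem is ineffective, and that is precisely the obstruction to an explicit constant. The standard remedy is to delete the primes dividing $k_0$ from the sifting set; since $S(A,P^{(1)}(z))\ge S(A,P(z))$ this is harmless for the \emph{upper} bounds $\Omega_2$ and $\Omega_3$ (and the paper does exactly that there), but it points the wrong way for the \emph{lower} bound: a lower bound for $S(A,P^{(1)}(z))$ says nothing about $S(A,P(z))$. As written, your proposal cannot produce an explicit lower bound for $\Omega_1$.

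The missing idea, which is the paper's main technical novelty (Section \ref{LS}), is an inclusion--exclusion identity over the prime factors $q_1>q_2>\cdots>q_l$ of the large exceptional modulus: with $m_j=q_1\cdots q_j$, $A^{(j)}=A_{m_j}$ and $P^{(j)}$ the sifting set with $q_1,\dots,q_j$ removed,
\begin{equation*}
S(A,P(z))=\sum_{i=0}^{l-1}(-1)^i S\bigl(A^{(i)},P^{(i+1)}(z)\bigr)+(-1)^l S\bigl(A^{(l)},P^{(l)}(z)\bigr),
\end{equation*}
where every term on the right involves only nonexceptional moduli, so the positive terms can be bounded below by the lower-bound sieve and the negative ones above by the upper-bound sieve; the extra main terms telescope back to $|A|V(z)$ up to an admissible error, using the Brun--Titchmarsh inequality to control $|A_{m_l}|$. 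Some device of this kind is indispensable before the numerical verification you describe can even be set up, and it also costs part of the final constant (the $\ep_0(N)$ losses), which is why the bookkeeping in the paper is organized around it. The rest of your plan (the bilinear-form level of distribution for the switched sequence, the explicit Mertens and partial-summation estimates) matches the paper's execution.
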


Our argument is based on Nathanson's one, which used
Rosser-Iwaniec linear sieve to give upper and lower bounds for numbers of sifted primes,
combining explicit error terms for the disribution of primes in arithmetic progressions
and explicit Rosser-Iwaniec linear sieve, which are given in other papers
by the author \cite{Ymd1}\cite{Ymd2}.

However, possible existence of a Siegel zero prevents from making the size of error term
in Rosser-Iwaniec linear sieve explicit.
There are two cases --- the exceptional modulus is large or small.
If the exceptional modulus is small, then we can see that the contribution of
the Siegel zero can be absorbed into error estimates concerning the distribution of
primes in arithmetic progression (see Lemma \ref{lm24}).
In the other case, when the exceptional modulus is large, it is easy
to avoid a possible Siegel zero in the argument to estimate upper bounds
since we can exclude a prime dividing the exceptional modulus from sifting primes.
However, we cannot directly avoid a possible Siegel zero in the argument
to estimate lower bounds.  In order to overcome this obstacle, we use a variant of
inclusion-exclusion principle and both upper bound and lower bound sieves,
as performed in Section \ref{LS}.  Thus we can obtain explicit bounds.

So that, our argument can be divided into four parts:
error estimates involving the number of primes in arithmetic progressions based on estimates in \cite{Ymd1},
explicit error terms in Rosser-Iwaniec linear sieve shown in \cite{Ymd2},
upper bounds and lower bounds for various sets of sifted primes,
and the final conclusion.

For calculations of constants, we used PARI-GP.  Our script is available from
\url{http://tyamada1093.web.fc2.com/math/files/prim0009pari.txt}

\section{Preliminary results}\label{SecPre}

In this section, we shall introduce some preliminary results, involving explicit estimates for
various quantities involving the number of primes in arithmetic progressions.

We begin by noting that, in this paper, $\theta$ denotes a quantity with $\abs{\theta}\leq 1$
taking different values at each occurence.  It can easily be distinguished
from Chebyshev functions.

We shall introduce a partial-sum type inequality.

\begin{lem}\label{lm21}
Let $f(x)$ be a monotone function defined in $w\leq x\leq z$,
$c(n)$ be an arithmetic function satisfying
\begin{equation}
\sum_{x\leq n<y}c(n)\leq g(y)-g(x)+E
\end{equation}
for some constant $E$ whenever $w\leq x\leq y<z$.
Then we have
\begin{equation}
\sum_{w\leq n<z}c(n)f(n)\leq\int_w^z f(t)g^\prime(t)dt+E\max\{f(w), f(z)\}.
\end{equation}
\end{lem}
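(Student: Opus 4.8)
\emph{Proof proposal.} The plan is to prove this by partial (Abel) summation: I would rewrite the weighted sum over the integers in $[w,z)$ as $f$ at one endpoint times $\sum_{w\le n<z}c(n)$, plus an integral of $f'$ against a tail partial sum of $c$, substitute the hypothesis into the resulting partial sums, and then recognise an integration-by-parts identity. I would split according to whether $f$ is non-decreasing or non-increasing, since the residual $E$-term should land on whichever of $f(w)$, $f(z)$ is the larger. Three preliminary observations: $g$ must be taken differentiable (hence continuous) for the right-hand integral to have its intended meaning; putting $x=y$ in the hypothesis gives $E\ge0$; and the argument really needs $f\ge0$ on $[w,z]$ --- a small sign-changing example shows the displayed inequality can fail otherwise --- so I would assume this (presumably it is intended). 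Applying the hypothesis with $y\to z^-$ and using continuity of $g$, I record the two consequences I actually use:
\[
\sum_{t\le n<z}c(n)\le g(z)-g(t)+E,\qquad \sum_{w\le n\le t}c(n)\le g(t)-g(w)+E \qquad (w\le t<z).
\]

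Suppose first that $f$ is non-decreasing; for transparency take $f\in C^1$ (the general monotone case is identical, with the Stieltjes measure $df$ in place of $f'(t)\,dt$). Writing $f(n)=f(w)+\int_w^n f'(t)\,dt$ and interchanging the finite sum with the integral,
\[
\sum_{w\le n<z}c(n)f(n)=f(w)\sum_{w\le n<z}c(n)+\int_w^z f'(t)\left(\sum_{t\le n<z}c(n)\right)dt.
\]
Since $f(w)\ge0$ and $f'\ge0$, I may replace each partial sum on the right by the upper bound recorded above. Expanding, using $\int_w^z f'(t)\,dt=f(z)-f(w)$ together with the identity $f(z)g(z)-f(w)g(w)-\int_w^z f'(t)g(t)\,dt=\int_w^z f(t)g'(t)\,dt$, every cross term cancels and exactly $\int_w^z f(t)g'(t)\,dt+E\,f(z)$ remains; and $f(z)=\max\{f(w),f(z)\}$ because $f$ is non-decreasing.

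The non-increasing case is the mirror image. Anchor instead at the right endpoint, $f(n)=f(z)-\int_n^z f'(t)\,dt$; interchanging now produces $\sum_{w\le n\le t}c(n)$, weighted by $-f'(t)\ge0$, to which the second bound above applies. The same cancellation leaves $\int_w^z f(t)g'(t)\,dt+E\,f(w)$, and here $f(w)=\max\{f(w),f(z)\}$.

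I do not expect a genuine obstacle: the content is an elementary partial-summation manipulation. The points needing care are the bookkeeping ones --- making sure it is $\sum_{t\le n<z}$ (respectively $\sum_{w\le n\le t}$) that appears after the interchange, so that the hypothesis is invoked with $x,y$ lying in $[w,z)$ and so that the factor multiplying $E$ is $f$ evaluated at the anchoring endpoint --- together with the single use of $f\ge0$, which is in passing from the interchanged identity to the stated inequality.
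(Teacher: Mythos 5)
Your argument is correct. The paper does not actually prove this lemma --- it simply cites Lemma 1(ii) of Greaves, \emph{Sieves in Number Theory}, pp.~30--31 --- so there is nothing in the paper to compare against step by step; your partial-summation proof is essentially the standard one underlying Greaves' statement. Two of your side remarks are worth keeping: the hypothesis as printed in the paper omits the non-negativity of $f$ (and the differentiability of $g$), both of which are present in Greaves' original formulation and both of which your proof genuinely uses --- without $f\ge 0$ the inequality can fail, since the hypothesis controls partial sums of $c$ only from above. Your bookkeeping is also right: anchoring at $w$ in the non-decreasing case produces the tail sums $\sum_{t\le n<z}c(n)$ weighted by $f'(t)\ge 0$ and leaves the error $E\,f(z)$, while anchoring at $z$ in the non-increasing case produces $\sum_{w\le n\le t}c(n)$ weighted by $-f'(t)\ge 0$ and leaves $E\,f(w)$; in each case the surviving endpoint is the maximum, and the extension of the hypothesis from $y<z$ to $y=z$ via continuity of $g$ is legitimate because the sum stabilises for $y$ sufficiently close to $z$. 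No gap.
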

\begin{proof}
This is Lemma 1, (ii) in \cite[p.p. 30--31]{Gre}.
\end{proof}

We shall often use the following explicit estimates.

\begin{lem}\label{lm22}
For $n\geq 3$,
\begin{equation}\label{eq21}
\omega(n)<\frac{1.3841\log n}{\log\log n}.
\end{equation}
For $x\geq 2973$,
\begin{equation}\label{eq22}
\prod_{p\leq x}\left(1-\frac{1}{p}\right)=\frac{e^{-\gamma}}{\log x}\left(1+\frac{\theta}{5\log^2 x}\right).
\end{equation}
Moreover, we have, for any real numbers $a>1$ and $b>10372$,
\begin{equation}\label{eq23}
\sum_{a\leq p<b}\frac{1}{p}<\log\log b-\log\log a+\frac{1}{5\log^2 a}+\frac{8}{15\log^3 a}
\end{equation}
\end{lem}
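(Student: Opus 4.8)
The plan is to handle the three estimates separately, in each case quoting the sharpest explicit bound available in the literature and closing the small gap between the standard constants and the ones stated here by a finite computation of the type described in Section~\ref{intro}.

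For \eqref{eq21} I would use the familiar extremal argument for $\omega$. If $\omega(n)=k$ then $n$ is divisible by the product of the $k$ smallest primes, so $\log n\ge\vartheta(p_k)$, where $p_k$ is the $k$th prime and $\vartheta$ denotes Chebyshev's function. Since $L\mapsto L/\log L$ is eventually increasing, for large $k$ the extreme case for fixed $k$ is $n$ equal to the $k$th primorial, where the bound becomes $k\le 1.3841\,\vartheta(p_k)/\log\vartheta(p_k)$; this is exactly Robin's inequality. The small values of $k$ (equivalently, the small values of $n$, where $\log\log n$ is small and the right-hand side is large) are disposed of by direct inspection. So for \eqref{eq21} I would cite Robin.

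For \eqref{eq22}, this is the explicit form of Mertens' third theorem: Rosser and Schoenfeld give the two-sided bound with error term $1/(2\log^2 x)$, and Dusart sharpened the constant to $1/5$ at the cost of the threshold $x\ge2973$, which is precisely \eqref{eq22}. For \eqref{eq23} I would start from sharp two-sided explicit estimates $\log\log x+M-r(x)\le\sum_{p\le x}1/p\le\log\log x+M+s(x)$, with $r(x),s(x)=O(1/\log^2 x)$, available either from Dusart or by partial summation from Chebyshev-type bounds as in \cite{Ymd1}. Writing $\sum_{a\le p<b}1/p=\sum_{p<b}1/p-\sum_{p<a}1/p$, the Mertens constant $M$ cancels and one is left with $\log\log b-\log\log a$ plus error terms. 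Two points need care: the interval is half-open, so $\sum_{p<b}1/p\le\sum_{p\le b}1/p$ while $\sum_{p<a}1/p$ and $\sum_{p\le a}1/p$ differ by at most the single term $1/a$, and only when $a$ is prime; and the collected error must be shown to stay below $\tfrac{1}{5\log^2 a}+\tfrac{8}{15\log^3 a}$ once $b>10372$. When $a$ is close to $1$ the right-hand side of \eqref{eq23} is unbounded, so there is nothing to prove; for $a$ bounded away from $1$ the stray $1/a$ is small, the error terms from the two endpoints can be arranged to reinforce favorably, and what remains is an elementary comparison of explicit functions together with a direct check over the finitely many primes below the thresholds involved.

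I expect the main obstacle to be bookkeeping rather than any genuine difficulty: matching the exact constants ($1.3841$, the two occurrences of $1/5$, the $8/15$) and thresholds ($2973$ and $10372$) to what the cited estimates actually deliver, and, in \eqref{eq23}, choosing the error terms attached to the two endpoints so that subtraction leaves something no larger than $\tfrac{1}{5\log^2 a}+\tfrac{8}{15\log^3 a}$ while correctly accounting for the half-open interval and for prime $a$. None of this is deep, but the numerology is exactly what the PARI-GP computation is there to certify.
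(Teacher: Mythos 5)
Your proposal matches the paper's proof, which simply cites Robin's Theorem 11 for \eqref{eq21} and Dusart's Theorems 6.12 and 6.10 for \eqref{eq22} and \eqref{eq23} respectively; your extra detail on deriving \eqref{eq23} by subtracting two-sided Mertens-type estimates is exactly what "follows from" means there. No substantive difference.
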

\begin{proof}
(\ref{eq21}) is Theorem 11 of \cite{Rob}.
(\ref{eq22}) and (\ref{eq23}) follow from Theorem 6.12 and Theorem 6.10 in \cite{Dus} respectively.
\end{proof}

Henthforth we shall give explicit estimates for various quantities involving
the error terms concerning to the number of primes in arithmetic progressions.
Let $E_f(x; k, l)$ denote the error function $f(x; k, l)-\frac{f(x)}{\vph(k)}$
for $f=\pi$ (i.e. $f(x)=\pi(x)$ and $f(x; k, l)=\pi(x; k, l)$), $\theta$ or $\psi$.

\begin{lem}\label{lm23}
Let $x>X_1=\exp\exp 11.7$ and $k<\log^{10} x$ be an integer.
Let $E_0=1$ and $\beta_0$ denote the Siegel zero modulo $k$ if it exists and $E_0=0$ otherwise.
Then we have
\begin{equation}
\quad \frac{\vph(q)}{x}\abs{E_\psi(x; k, l)}<\frac{0.000012}{\log^8 x}+E_0\frac{x^{\beta_0-1}}{\beta_0}.
\end{equation}
\end{lem}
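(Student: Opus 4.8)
The plan is to derive this explicit bound on $\frac{\varphi(k)}{x}\lvert E_\psi(x;k,l)\rvert$ from the explicit prime-counting estimates in arithmetic progressions established in \cite{Ymd1}, carefully tracking the role of a possible exceptional (Siegel) zero. First I would recall the explicit version of the Siegel--Walfisz/Page-type theorem from \cite{Ymd1}: for $x$ larger than some explicit threshold and $k$ bounded by a fixed power of $\log x$, one has
\begin{equation*}
\psi(x;k,l)=\frac{x}{\varphi(k)}-E_0\frac{\chi_1(l)}{\varphi(k)}\cdot\frac{x^{\beta_0}}{\beta_0}+O^*\!\left(\text{explicit small term}\right),
\end{equation*}
where $\chi_1$ is the real character modulo $k$ associated with the exceptional zero $\beta_0$ (when it exists), and the error is controlled by the zero-free region for the remaining characters. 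The key point is that the main zero-free region contribution, after multiplying by $\varphi(k)/x$, decays like a negative power of $x$ coming from $\exp(-c\sqrt{\log x})$ or from an explicit density estimate, and for $x>\exp\exp 11.7$ this is comfortably below $0.000012/\log^8 x$.

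The steps, in order, would be: (1) invoke the relevant theorem of \cite{Ymd1} giving $\lvert E_\psi(x;k,l)\rvert$ in terms of a sum over nontrivial zeros of $L$-functions modulo $k$, isolating the exceptional zero term; (2) bound the non-exceptional part using the explicit zero-free region and a zero-density estimate, obtaining a bound of the shape $C x \exp(-c\sqrt{\log x})$ or $Cx(\log x)^{A}/\exp(c'\log x/\log\log x)$ uniformly for $k<\log^{10}x$; (3) multiply through by $\varphi(k)/x\le 1$ and verify numerically, using the PARI-GP script and Lemma \ref{lm22} where needed, that for $x>\exp\exp 11.7$ this quantity is less than $0.000012/\log^8 x$ --- this reduces to checking that a certain decreasing function of $x$ is below the target at $x=X_1$; (4) for the exceptional term, simply retain $\lvert\chi_1(l)\rvert/\varphi(k)\cdot x^{\beta_0}/\beta_0\le x^{\beta_0}/\beta_0 = x\cdot x^{\beta_0-1}/\beta_0$, and after multiplying by $\varphi(k)/x\le 1$ bound it by $x^{\beta_0-1}/\beta_0$, which is exactly the $E_0 x^{\beta_0-1}/\beta_0$ term in the statement.

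The main obstacle will be step (3): making sure the threshold $X_1=\exp\exp 11.7$ is actually large enough for the non-exceptional error, after division by $x/\varphi(k)$ and the constraint $k<\log^{10}x$, to beat $0.000012/\log^8x$. The competition between the $\log^{10}x$ growth allowed in $k$ (which enters the zero-free-region error through factors like $\log^2(kx)$ and $\log k$) and the gain from the zero-free region is delicate, and one must be careful that the explicit constants from \cite{Ymd1} are strong enough at this relatively modest value of $x$; this is presumably where the PARI-GP verification does the real work. A secondary subtlety is bookkeeping the character $\chi_1(l)$ and the sign of the exceptional term: since we only need an upper bound on the absolute value, we discard the sign, but one must confirm that \cite{Ymd1} indeed produces a clean split with only \emph{one} exceptional zero contributing (Page's theorem guarantees at most one such modulus/character in the relevant range), so that the single term $E_0 x^{\beta_0-1}/\beta_0$ suffices.
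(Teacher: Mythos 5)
Your proposal is essentially the paper's approach: the lemma is proved there by a one-line citation of Theorem 1.1 of \cite{Ymd1} with parameters $(\alpha_1,\alpha,Y_0)=(10,8,11.7)$, of which the lemma is a verbatim instance. Your steps (1)--(4) are really a sketch of the proof of that cited theorem rather than of anything done in this paper, but your identification of the single exceptional-zero term $E_0 x^{\beta_0-1}/\beta_0$ and of the numerical threshold is exactly what the companion paper supplies, so the argument is correct and no genuinely different route is taken.
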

\begin{proof}
This is Theorem 1.1 of \cite{Ymd1} with $(\al_1, \al, Y_0)=(10, 8, 11.7)$.
\end{proof}

Define $\Pi(s, q)=\prod_{\chi\pmod{q}}L(s, \chi)$ and let $R_0=6.397$ and $R_1=2.0452\cdots$.
Theorem 1.1 of Kadiri\cite{Kad} states that the function $\Pi(s, q)$ has at most one zero
$\rho=\beta+it$ in the region $0\leq\beta<1-1/R_0\log \max\{q, q\abs{t}\}$,
which must be real and simple and induced by some nonprincipal real primitive character $\wt\chi\pmod{\wt{q}}$ with $987\leq \wt{q}\leq x$.
Moreover, Theorem 1.3 of \cite{Kad} implies that, for any given $Q_1$, such zero satisfies $\beta<1-1/2R_1\log Q_1$
except possibly one modulus below $Q_1$.
Henceforth let $k_0$ be a such a modulus if it exists
and call this modulus and the corresponding character to be exceptional.  Furthermore,
we set $\dt=7/5$ and define $k_1=k_0$ if $k_0\geq\log^\dt x$ and $k_1=0$ otherwise,
so that $k_1\mid k$ is equivalent to both $k_0\mid k$ and $k_0\geq \log^\dt x$ hold.

\begin{cor}\label{cor231}
Assume that $x$ is a real number $>X_2=\exp\exp 32$, $x_2=\frac{e^{-100} x}{\log^4 x}$,
$K_0=\log^\dt x_2$ and let $Q_1=\log^{10} x_2$.
Moreover, let $k_1=k_0$ if $k_0\geq K_0$ and $k_1=0$ otherwise,
If $k$ is a modulus $\leq Q_1$ not divisible by $k_1$,
then we have
\begin{equation}
\quad \frac{\vph(k)}{x}\abs{E_\pi(x; k, l)}<\frac{e^{-14}}{\log^4 x}.
\end{equation}
\end{cor}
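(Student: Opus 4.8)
The plan is to convert the bound on $E_\psi$ from Lemma~\ref{lm23} into a bound on $E_\pi$ by partial summation, and then to absorb all the error terms under the more generous denominator $e^{-14}/\log^4 x$ that the corollary allows. First I would check that the hypotheses of Lemma~\ref{lm23} are satisfied: since $x>X_2=\exp\exp 32$ we certainly have $x>X_1=\exp\exp 11.7$, and the modulus $k$ is assumed to be $\leq Q_1=\log^{10}x_2<\log^{10}x$, so Lemma~\ref{lm23} applies and gives
\[
\frac{\vph(k)}{x}\abs{E_\psi(x;k,l)}<\frac{0.000012}{\log^8 x}+E_0\frac{x^{\beta_0-1}}{\beta_0}.
\]
The term $0.000012/\log^8 x$ is comfortably smaller than $e^{-14}/\log^4 x$ once $\log x$ is large (and $\log x>\exp 32$ here), so it contributes negligibly; the same will be true after I pass from $\psi$ to $\pi$.

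Next I would handle the passage from $\psi(x;k,l)$ to $\pi(x;k,l)$. Writing $\psi(x;k,l)=\sum_{n\leq x,\,n\equiv l}\Lambda(n)$ and $\pi(x;k,l)=\sum_{p\leq x,\,p\equiv l}1$, the difference $\pi(x;k,l)-\psi(x;k,l)/\log x$ is controlled by (i) the contribution of prime powers $p^m$ with $m\geq 2$, which is $O(\sqrt{x}\log x/\vph(k))$ and hence utterly negligible against $x/\log^4 x$, and (ii) a partial-summation term $\int_2^x E_\psi(t;k,l)\,d t/(t\log^2 t)$ together with the boundary term $E_\psi(x;k,l)/\log x$, using Lemma~\ref{lm21} with $f(t)=1/\log t$ and $g(t)=t$ (or $g(t)=t/\vph(k)$ after dividing by $\vph(k)$). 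Applying Lemma~\ref{lm23} inside the integral, the main piece $\int x^{\beta_0}t^{-1}\,d t$-type contribution, divided by $\log x$, reproduces a term of the same shape $E_0 x^{\beta_0-1}/\beta_0$ up to a bounded factor, and the $0.000012/\log^8$ piece integrates to something still $\ll 1/\log^7 x$. So after multiplying back by $\vph(k)/x$ I get
\[
\frac{\vph(k)}{x}\abs{E_\pi(x;k,l)}<\frac{C}{\log^5 x}+C'E_0\,\frac{x^{\beta_0-1}}{\beta_0}
\]
for explicit constants, where the first term is already below $e^{-14}/\log^4 x$.

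The real work is the Siegel-zero term $E_0 x^{\beta_0-1}/\beta_0$, and this is where I expect the main obstacle. Here one must use the hypothesis that $k$ is \emph{not} divisible by $k_1$. Recall $k_1=k_0$ if the exceptional modulus $k_0$ satisfies $k_0\geq K_0=\log^\dt x_2$, and $k_1=0$ otherwise. If $k_1=0$ (no large exceptional modulus), then the relevant zero $\beta_0$ for modulus $k$, if it exists at all, must come from a modulus below $Q_1$ other than $k_0$, hence by Kadiri's Theorem~1.3 (as quoted in the excerpt, with $Q_1=\log^{10}x_2$) satisfies $\beta_0<1-1/(2R_1\log Q_1)$, so $x^{\beta_0-1}<\exp(-\log x/(2R_1\log Q_1))$. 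Since $\log Q_1=10\log\log x_2\asymp 10\log\log x$, this is $\exp(-c\log x/\log\log x)$, which decays faster than any power of $\log x$ and in particular is $\ll e^{-14}/\log^4 x$ for $x>\exp\exp 32$. If instead $k_1=k_0\geq K_0$, the hypothesis $k_1\nmid k$ forces $k_0\nmid k$, so there is simply no Siegel zero modulo $k$ attached to the exceptional character, and again $\beta_0<1-1/(2R_1\log Q_1)$ with the same conclusion. Thus in every case $E_0 x^{\beta_0-1}/\beta_0$ is super-polynomially small. Combining: the total is $<C/\log^5 x + (\text{super-polynomially small})$, and a direct numerical check at $x=X_2=\exp\exp 32$ (so $\log x=\exp 32$, $\log\log x=32$) confirms this is $<e^{-14}/\log^4 x$, with the inequality only improving as $x$ grows since every error term carries a strictly larger negative power of $\log x$ (or decays exponentially) than the target. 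The one point requiring care is tracking the constants $C,C'$ through the partial summation so that the numerical inequality at the threshold actually closes; I would carry out that bookkeeping with the PARI-GP script referenced in the introduction.
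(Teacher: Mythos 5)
Your overall skeleton (apply Lemma \ref{lm23} to $\psi$, pass to $\pi$ by partial summation, and absorb the $0.000012/\log^8 x$ term, the prime-power contribution and the trivial bounds near the lower endpoint into the generous target $e^{-14}/\log^4 x$) is essentially the paper's; note only that the paper truncates the partial summation at $x_2=e^{-100}x/\log^4 x$ rather than at $2$, precisely because Lemma \ref{lm23} needs $k<\log^{10}t$ and $k$ may be as large as $\log^{10}x_2$, so the range $t\leq x_2$ must be handled by the trivial bound $\abs{E_\pi(x_2;k,l)}\leq x_2$ — harmless thanks to the factor $e^{-100}$.

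The genuine gap is in your treatment of the Siegel-zero term when $k_1=0$. You assert that the zero ``must come from a modulus below $Q_1$ other than $k_0$,'' but nothing forces this: $k_1=0$ only means $k_0<K_0$, and the hypothesis $k_1\nmid k$ is then vacuous, so $k$ may perfectly well be divisible by $k_0$ (for instance $k=k_0$ itself). In that case the Siegel zero modulo $k$ \emph{is} the exceptional zero, and Kadiri's Theorem 1.3 gives no bound on it at all — that is exactly what makes the modulus exceptional — so your argument leaves open the possibility that $x^{\beta_0-1}$ is not small. The missing ingredient is an explicit Page-type bound for small moduli: the paper invokes Theorem 3 of \cite{LW}, which gives $\beta_0\leq 1-\pi/(0.4923\,k_0^{1/2}\log^2 k_0)$ unconditionally; combined with $k_0<K_0=\log^{7/5}x_2$ this yields $\beta_0\leq 1-\pi/(0.4923\,K_0^{1/2}\log^2 K_0)$, and since $K_0^{1/2}\log^2 K_0$ grows only like $\log^{0.7}x\,(\log\log x)^2$ one still gets $x^{\beta_0-1}/\beta_0<e^{-15}/\log^3 x$ for $x>X_2$ — superpolynomially small, but for a reason entirely different from the one you give. (The paper checks that this same bound on $\beta_0$ also covers the case $k_0\nmid k$, which streamlines the remaining computation.) With this correction the rest of your bookkeeping goes through.
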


\begin{proof}
We begin by observing that $\beta_0\leq 1-\pi/0.4923 K_0^{1/2}\log^2 K_0$.
It is clear that either $k_0\nmid k$ or $k_0<\log^\dt x_2$ holds if there exists the Siegel zero.
In the case $k_0\nmid k$, it is clear that $\beta_0<1-1/2R_1\log Q_1=1-1/20R_1\log\log x_2<1-\pi/0.4923 K_0^{1/2}\log^2 K_0$.
In the other case $k_0<K_0$, Theorem 3 of \cite{LW} gives
$\beta_0\leq 1-\pi/0.4923k_0^{1/2}\log^2 k_0\leq 1-\pi/0.4923 K_0^{1/2}\log^2 K_0$.

Let $y$ be an arbitrary real number with $x_2<y\leq x$.  Since $1/2<\beta_0\leq 1-\pi/0.4923 K_0^{1/2}\log^2 K_0$,
we can see that $\frac{y^{\beta_0-1}}{\beta_0}<\frac{e^{-15}}{\log^3 y}$ from $x>X_2$
if the Siegel zero exists.
Thus, by Lemma \ref{lm23},
\begin{equation}
\frac{\vph(q)}{y}\abs{\psi(y; k, l)-\frac{y}{\vph(k)}}<\frac{1.1e^{-15}}{\log^3 y}.
\end{equation}
The rough estimate $\abs{\psi(y; k, l)-\theta(y; k, l)}<y^\frac{1}{2}\log^2 y/\log 2$ is enough to give
\begin{equation}
\frac{\vph(k)}{y}\abs{\theta(y; k, l)-\frac{y}{\vph(k)}}<\frac{1.2e^{-15}}{\log^3 y}.
\end{equation}
Since
\begin{equation}
\vph(k)\abs{\frac{\theta(y)}{y}-1}<\frac{0.1e^{-15}}{\log^3 y}
\end{equation}
for $y>X_2$ by Theorem 2 of \cite{RS2}, we have
\begin{equation}
\frac{\vph(k)}{y}\abs{E_\theta(y; k, l)}<\frac{1.3e^{-15}}{\log^3 y}.
\end{equation}

Now, partial summation yields
\begin{equation}
E_\pi(x; k, l)=E_\pi(x_2; k, l)+\frac{E_\theta(x; k, l)}{\log x}-\frac{E_\theta(x_2; k, l)}{\log x_2}+\int_{x_2}^x\frac{E_\theta(t; k, l)}{t\log^2 t}dt,
\end{equation}
and
\begin{equation}
\begin{split}
\vph(k)\int_{x_2}^x\frac{E_\theta(t; k, l)}{t\log^2 t}dt<&\frac{1.3e^{-15}}{1-\frac{5}{\log x_2}}\int_{x_2}^x\frac{1}{\log^5 t}\left(1-\frac{5}{\log t}\right)dt\\
=&\frac{1.3e^{-15}}{1-\frac{5}{\log x_2}}\left(\frac{x}{\log^5 x}-\frac{x_2}{\log^5 x_2}\right).
\end{split}
\end{equation}
Hence we obtain
\begin{equation}
\abs{E_\pi(x; k, l)}<\frac{1.3e^{-15} x}{\log^4 x}+\frac{2.6e^{-15} x_2}{\log^4 x_2}+1.4e^{-15}\left(\frac{x}{\log^5 x}-\frac{x_2}{\log^5 x_2}\right).
\end{equation}
The right-hand side does not exceed $\frac{e^{-14}}{\log^4 x}$ if $x>X_2$, which proves the corollary.
\end{proof}

\begin{lem}\label{lm24}
Assume that $x$ is a any real number $>X_2$.
Let $x_2, Q_1, k_1$ be as in Corollary \ref{cor231} and $Q=\frac{\sq{x_2}}{\log^{10} x_2}$.
Then we have
\begin{equation}
\sum_{k\leq Q, k_1 \nmid k}\mu^2(k)\max_{l\pmod{k}}\abs{E_\pi(x; k, l)}<\frac{e^{-8}x}{\log^3 x}.
\end{equation}
\end{lem}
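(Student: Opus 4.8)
The plan is to split the range of moduli $k\leq Q$ into a small range $k\leq Q_1=\log^{10}x_2$, on which Corollary \ref{cor231} applies directly, and a large range $Q_1<k\leq Q$, which is the genuine Bombieri--Vinogradov territory. For the small range, since $k_1\nmid k$ and $k\leq Q_1$, Corollary \ref{cor231} gives $\varphi(k)\abs{E_\pi(x;k,l)}<e^{-14}x/\log^4 x$ for every residue $l$, hence
\begin{equation}
\sum_{k\leq Q_1}\mu^2(k)\max_{l}\abs{E_\pi(x;k,l)}<\frac{e^{-14}x}{\log^4 x}\sum_{k\leq Q_1}\frac{\mu^2(k)}{\varphi(k)}.
\end{equation}
The sum $\sum_{k\leq Q_1}\mu^2(k)/\varphi(k)$ is $\ll\log Q_1=10\log\log x_2$, which is negligible against $\log x$; so this portion is $O(x/\log^3 x)$ with a tiny implied constant, well inside the budget.

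For the large range $Q_1<k\leq Q$ I would invoke an explicit Bombieri--Vinogradov inequality. Since $Q=\sqrt{x_2}/\log^{10}x_2$ and $x_2=e^{-100}x/\log^4 x$, we have $Q<\sqrt{x}/\log^{9}x$ say, which is within the classical range $Q<\sqrt{x}/\log^{A}x$. An explicit version — obtainable by combining Lemma \ref{lm23}'s pointwise bound with a large-sieve input, or by citing the relevant result from \cite{Ymd1} — yields
\begin{equation}
\sum_{k\leq Q}\mu^2(k)\max_{(l,k)=1}\abs{E_\psi(x;k,l)}\ll\frac{x}{\log^{B}x}
\end{equation}
for a suitable $B$, and then partial summation (exactly as in the proof of Corollary \ref{cor231}, passing from $\psi$ to $\theta$ to $\pi$ with the crude bound $\abs{\psi-\theta}<x^{1/2}\log^2 x/\log 2$) transfers this to $E_\pi$ with the loss of one logarithm. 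The condition $k_1\nmid k$ only removes terms, so it can only help; alternatively one handles the single possibly-exceptional modulus $k_1$ separately using the constraint $k_1\geq K_0=\log^{7/5}x_2$, which forces the Siegel-zero contribution $x^{\beta_0-1}/\beta_0$ to be small.

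The main obstacle is the Siegel zero. In the large range we cannot excise the exceptional modulus from every term — it genuinely appears among the $k\leq Q$ — so the bound from Lemma \ref{lm23} carries the term $E_0 x^{\beta_0-1}/\beta_0$ for those $k$ divisible by $k_0$. The resolution is the same dichotomy as in Corollary \ref{cor231}: the hypothesis $k_1\nmid k$ means that either $k_0\nmid k$ (in which case $\beta_0<1-1/(2R_1\log Q_1)$ by Kadiri's Theorem 1.3, making $x^{\beta_0-1}/\beta_0$ exponentially small) or $k_0<K_0=\log^{7/5}x_2$ (in which case Theorem 3 of \cite{LW} gives $\beta_0\leq 1-\pi/(0.4923\,k_0^{1/2}\log^2 k_0)$, again forcing $x^{\beta_0-1}$ to beat any power of $\log x$ once $x>X_2=\exp\exp 32$). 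Summing these small contributions over $k\leq Q$ costs at most a factor $Q\leq x^{1/2}$, which is absorbed. Collecting the two ranges and choosing the constants so that the total is below $e^{-8}x/\log^3 x$ — a routine PARI-GP verification using $x>X_2$ — completes the proof.
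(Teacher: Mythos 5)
Your overall architecture coincides with the paper's: both arguments rest on the explicit Bombieri--Vinogradov theorem of \cite{Ymd1} (the paper invokes its Theorem 1.4 for the whole range $k\leq Q$ at once, so your explicit split at $Q_1$ merely re-derives by hand what that theorem already packages), on the same Siegel-zero dichotomy (either $k_0\nmid k$ for all admissible $k$, or $k_0<K_0$ and the Liu--Wang repulsion bound applies), and on the same partial summation from $\psi$ to $\theta$ to $\pi$ starting at $x_2$, where the trivial bound $\sum_{k\leq Q}\max_l\abs{E_\pi(x_2;k,l)}<c_1x_2\log x=c_1e^{-100}x/\log^3 x$ is precisely what makes the lower endpoint harmless.

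However, one step as written is quantitatively false: ``Summing these small contributions over $k\leq Q$ costs at most a factor $Q\leq x^{1/2}$, which is absorbed.'' It cannot be absorbed. The repulsion bounds in play give only $x^{\beta_0-1}\leq\exp\bigl(-\pi\log x/(0.4923K_0^{1/2}\log^2K_0)\bigr)=\exp\bigl(-c\log^{0.3}x/(\log\log x)^2\bigr)$, which beats every fixed power of $\log x$ but is utterly negligible against a factor of $x^{1/2}$; multiplying by $Q$ would yield a bound worse than $x$ itself. The correct accounting, which is what the paper (following \cite{Ymd1}) does, is that the exceptional character contributes only to moduli $k$ with $k_0\mid k$, and its contribution to $\max_l\abs{E_\psi(x;k,l)}$ carries a factor $1/\varphi(k)$; the cost of summing is therefore $\sum_{k_0\mid k,\ k\leq Q}1/\varphi(k)=O(\log Q/\varphi(k_0))=O(\log x)$, and $x^{\beta_0-1}\log x$ is then indeed controlled by the repulsion bound. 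Relatedly, your suggestion that the large-range estimate is ``obtainable by combining Lemma \ref{lm23}'s pointwise bound with a large-sieve input'' should be dropped: Lemma \ref{lm23} is only valid for $k<\log^{10}x$, so for $Q_1<k\leq Q$ the only viable route is the cited theorem of \cite{Ymd1} itself, exactly as in the paper.
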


\begin{proof}
Let $y$ be an arbitrary real number with $x_2<y\leq x$.
We begin by showing that
\begin{equation}\label{eq24}
\sum_{k\leq Q, k_1\nmid k}\mu^2(k)\max_{l\pmod{k}}\abs{E_\psi(y; k, l)}<\frac{1.9e^{-9}y}{\log^2 y}.
\end{equation}

As in the proof of Corollary \ref{cor231}, either $k_0\mid k$ or $k_0\geq K_0$ holds
if the Siegel zero exists.  Moreover, in the case $k_0\geq K_0$, we have $k_1=k_0$.

If $k_1=k_0$ or there exists no Siegel zero, we can apply Theorem 1.4 of \cite{Ymd1} with $A=10$
but $Q_1$ in this theorem replaced by $Q_1$ in Corollary \ref{cor231}.
Let $c_0, c_1, C$ be the constants defined by
\begin{equation}
\begin{split}
c_0= & \frac{2^{\frac{13}{2}}}{9\pi\log 2}\left(\frac{1}{3}+\frac{3}{2\log 2}\right)\left(\frac{2+\log(\log 2/\log(4/3))}{\log 2}\right)\sqrt\frac{\psi(113)}{113},
\end{split}
\end{equation}
\begin{equation}
c_1=\prod_p\left(1+\frac{1}{p(p-1)}\right)=\frac{\zeta(2)\zeta(3)}{\zeta(6)}
\end{equation}
and $C=0.0000128$, as in Theorem 1.4 of \cite{Ymd1}.  We note that $c_0<48.83215$ and $c_1<1.9436$.
Now, since $Q_1=\log^{10} x_2\leq\log^{10} y$, Theorem 1.4 of \cite{Ymd1} gives
\begin{equation}
\begin{split}
&\sum_{k\leq Q, k_1\nmid k}\mu^2(k)\max_{l\pmod{k}}\abs{E_\psi(y; k, l)}\\
<&\frac{c_0c_1(2+e^{-800})y}{\log^{\frac{11}{2}} y}+\frac{2c_0c_1y\log^\frac{9}{2} y}{Q_1}+\frac{c_1^2(C+e^{-17})y(1+10\log\log y)}{4\log^6 y}.
\end{split}
\end{equation}

Since we can see that $Q_1>(1-e^{-20})\log^{10} x$, we have
\begin{equation}
\begin{split}
&\sum_{k\leq Q, k_1\nmid k}\mu^2(k)\max_{l\pmod{k}}\abs{E_\psi(y; k, l)}\\
<&\frac{c_0c_1(4+e^{-19})y}{\log^{\frac{11}{2}} y}+\frac{c_1^2(C+e^{-17})y(1+10\log\log y)}{4\log^6 y}\\
<&\frac{379.64067y}{\log^{\frac{11}{2}} y}<\frac{1.9e^{-9}y}{\log^2 y}
\end{split}
\end{equation}
and therefore (\ref{eq24}) holds.

Next, consider the case $k_0<K_0$.  Now we see that
\begin{equation}
\begin{split}
&\sum_{k\leq Q}\mu^2(k)\max_{l\pmod{k}}\abs{E_\psi(y; k, l)}\\
< & \left(\sum_{1\leq m\leq Q}\frac{1}{\vph(m)}\right)\sum_{1<q\leq Q}\abs{\sideset{}{^*}\sum_{\chi\pmod{q}}\psi(y, \chi)}.
\end{split}
\end{equation}
Similarly to the proof of Theorem 1.4 of \cite{Ymd1}, we have
\begin{equation}
\sideset{}{^*}\sum_{\chi\pmod{k_0}}\abs{\psi(y, \chi)}<\frac{(C+e^{-17})y}{\log^6 y}+\frac{y^{\beta_0-1}}{\beta_0},
\end{equation}
where $\beta_0$ denotes the Siegel zero modulo $k_0$.
Theorem 3 of \cite{LW} gives $\beta_0\leq 1-\pi/0.4923k_0^{1/2}\log^2 k_0\leq 1-\pi/0.4923K_0^{1/2}\log^2 K_0$
and we see that
\begin{equation}
\frac{y^{1-\frac{\pi}{0.4923K_0^\frac{1}{2}\log^2 K_0}}}{1-\frac{\pi}{0.4923K_0^\frac{1}{2}\log^2 K_0}}<\frac{e^{-11.5}y}{\log^3 y\log\log y}.
\end{equation}
A similar argument to the proof of Theorem 1.4 of \cite{Ymd1}
using this inequality instead of (52) in \cite{Ymd1} gives (\ref{eq24}).
Similarly to (53), for each $k\leq Q_1$, we have
\begin{equation}
\sideset{}{^*}\sum_{\chi\pmod{k}}\abs{\psi(y, \chi)}<\frac{(C_0+e^{-18})y}{\log^7 y}+\frac{e^{-11.5}y}{\log^3 y\log\log y}<\frac{2e^{-12}y}{\log^3 y\log\log y}
\end{equation}
and, similarly to (54) in \cite{Ymd1}, we obtain
\begin{equation}
\sum_{q\leq Q_1, q_0\nmid q}\frac{1}{\vph(q)}\sideset{}{^*}\sum_{\chi\pmod{q}}\abs{\psi(x, \chi)}\leq\frac{2.01c_1e^{-12}y(1+10\log\log y)}{\log^3 y\log\log y}<\frac{1.95e^{-9}y}{\log^3 y}.
\end{equation}
This gives
\begin{equation}
\sum_{k\leq Q, k_1\nmid k}\mu^2(k)\max_{l\pmod{k}}\abs{E_\psi(y; k, l)}<\frac{1.95c_1e^{-9}y}{2\log^2 y}<\frac{1.9e^{-9}y}{\log^2 y}.
\end{equation}

Now we estimate $E_\pi(x; k, l)$.  We begin by observing that partial summation gives
\begin{equation}
E_\pi(x; k, l)=E_\pi(x_2; k, l)+\frac{E_\theta(x; k, l)}{\log x}-\frac{E_\theta(x_2; k, l)}{\log x_2}+\int_{x_2}^x\frac{E_\theta(t; k, l)}{t\log^2 t}dt.
\end{equation}
We would like to majorize the four terms in the right-hand side.

From the argument in the proof of Theorem A. 17 of \cite{Nat}, we see that $\sum_{k\leq Q}\frac{1}{\vph(k)}<c_1(1+\log Q)<\frac{c_1}{2}\log x$
and therefore (\ref{eq24}) yields
\begin{equation}\label{eq25}
\begin{split}
&\sum_{k\leq Q, k_1\nmid k}\mu^2(k)\max_{l\pmod{k}}\abs{E_\theta(y; k, l)}\\
<&\sum_{k\leq Q, k_1\nmid k}\mu^2(k)\max_{l\pmod{k}}\abs{E_\psi(y; k, l)}\\
&+\frac{2y^\frac{1}{2}\log^2 y}{\log 2} \sum_{k\leq Q, k_1\nmid k}\frac{\mu^2(k)}{\vph(k)}\\
<&\frac{1.9e^{-9}y}{\log^2 y}+\frac{c_1y^\frac{1}{2}\log^2 y\log x}{\log 2}\\
\end{split}
\end{equation}
for $x_2\leq y\leq x$.
Hence we obtain
\begin{equation}\label{eq261}
\sum_{k\leq Q}\mu^2(k)\max_{l\pmod{k}} \abs{E_\theta(x; k, l)}<\frac{2e^{-9}x}{\log^2 x},
\end{equation}
\begin{equation}\label{eq262}
\sum_{k\leq Q}\mu^2(k)\max_{l\pmod{k}} \abs{E_\theta(x_2; k, l)}<\frac{1.9e^{-9}x_2}{\log^2 x_2}+\frac{c_1x^\frac{1}{2}\log^3 x}{\log 2}
\end{equation}
and
\begin{equation}\label{eq263}
\begin{split}
\int_{x_2}^x\frac{\max_{l\pmod{k}} \abs{E_\theta(t; k, l)}}{t\log^2 t}dt<&\int_{x_2}^x\frac{1.9e^{-9}}{\log^4 t}+\frac{c_1\log x}{t^\frac{1}{2}\log 2}dt\\
<&\frac{1}{1-\frac{1}{5\log x_2}}\int_{x_2}^x\frac{1.9e^{-9}}{\log^4 t}\left(1-\frac{4}{\log t}\right)dt+2c_1x^\frac{1}{2}\log x\\
=&\frac{1.9e^{-9}}{1-\frac{1}{5\log x_2}}\left(\frac{x}{\log^4 x}-\frac{x_2}{\log^4 x_2}\right)+2c_1x^\frac{1}{2}\log x\\
<&\frac{2e^{-9}x}{\log^4 x}.
\end{split}
\end{equation}

Moreover, we use a trivial estimate $E_\pi(x_2; k, l)\leq x_2$ to obtain
\begin{equation}\label{eq264}
\sum_{k\leq Q}\mu^2(k)\max_{l\pmod{k}}E_\pi(x_2; k, l)<x_2\sum_{k\leq Q}\frac{\mu^2(k)}{\vph(k)}<c_1 x_2\log x.
\end{equation}

Combining (\ref{eq261})-(\ref{eq264}), we have
\begin{equation}\label{eq265}
\sum_{k\leq Q, k_1 \nmid k}\mu^2(k)\max_{l\pmod{k}}\abs{E_\pi(x; k, l)}<\frac{e^{-8}x}{\log^3 x}.
\end{equation}
This proves the lemma.
\end{proof}

Now we introduce a extention of the previous lemma, which plays an important role in our argument
to avoid the exceptional modulus.
\begin{lem}\label{lm25}
Let $x>X_2=\exp\exp 36$ and $Q, k_1$ as in the previous lemma.  If $k$ divides $k_1$, 
then we have
\begin{equation}
\begin{split}
&\sideset{}{^\#}\sum_{d\leq Q/k} \mu^2(k)\max_{l\pmod{k}}\abs{\pi(x; kd, l)-\frac{\pi(x; k, l)}{\vph(d)}}\\
&\quad <\frac{e^{-8}x}{\log^3 x},
\end{split}
\end{equation}
where $d$ runs over integers such that $k_1\nmid kd$ if $k\neq k_1$. 
\end{lem}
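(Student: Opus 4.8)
The plan is to rerun the proof of Lemma~\ref{lm24}; the one genuinely new ingredient is an orthogonality identity that exhibits the cancellation of the exceptional zero inside the difference $\pi(x;kd,l)-\pi(x;k,l)/\vph(d)$. We may assume $\mu^2(k)=1$ (otherwise the sum vanishes), read $\max_l$ throughout over reduced residues $l$ modulo $kd$, and take $d$ coprime to $k$ (as it is in the application), so that $\vph(kd)=\vph(k)\vph(d)$ and, with $E_\pi$ as in Section~\ref{SecPre},
\begin{equation*}
\pi(x;kd,l)-\frac{\pi(x;k,l)}{\vph(d)}=E_\pi(x;kd,l)-\frac{E_\pi(x;k,l)}{\vph(d)}.
\end{equation*}
Exactly as in Corollary~\ref{cor231} and Lemma~\ref{lm24}, I would first pass from $\pi$ to $\theta$ to $\psi$ by partial summation; summed over $d\le Q/k$, the prime-power and $\theta$-to-$\pi$ corrections contribute an amount negligible against $y/\log^2 y$, just as there. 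So it suffices to prove
\begin{equation*}
\sideset{}{^\#}\sum_{d\le Q/k}\mu^2(k)\max_{l\pmod{kd}}\abs{\psi(y;kd,l)-\frac{\psi(y;k,l)}{\vph(d)}}<\frac{1.9\,e^{-9}y}{\log^2 y}\qquad(x_2\le y\le x),
\end{equation*}
and feed this into the steps (\ref{eq25})--(\ref{eq265}) of Lemma~\ref{lm24}.

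The key step is the identity. Each character modulo $k$ is induced by a unique primitive character, which in turn induces a unique character modulo $kd$, so the characters modulo $kd$ of conductor $f_\chi$ dividing $k$ correspond bijectively to the characters modulo $k$. Peeling these off and letting $\chi^\sharp$ be the character modulo $k$ inducing $\chi$ when $f_\chi\mid k$, one obtains, for $(l,kd)=1$,
\begin{equation*}
\psi(y;kd,l)-\frac{\psi(y;k,l)}{\vph(d)}=\frac{1}{\vph(kd)}\sum_{\substack{\chi\bmod kd\\ f_\chi\nmid k}}\bar\chi(l)\psi(y,\chi)+\frac{1}{\vph(kd)}\sum_{\substack{\chi\bmod kd\\ f_\chi\mid k}}\bar\chi(l)\bigl(\psi(y,\chi)-\psi(y,\chi^\sharp)\bigr),
\end{equation*}
where $\psi(y,\chi)-\psi(y,\chi^\sharp)=-\sum_{p\mid d,\,p\nmid k}\,\sum_{p^j\le y}\chi^\sharp(p^j)\log p$ (for $\chi=\chi_0$ this is just the residual discrepancy of the two main terms). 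Now the hypotheses do their job. Let $\wt\chi\bmod\wt{q}$ be the exceptional character, so $\wt{q}\mid k_0=k_1$. If $\wt{q}\mid k$ --- in particular whenever $k=k_1$, which is exactly why no restriction on $d$ is imposed in that case --- then the character it induces modulo $kd$ has conductor dividing $k$, hence lies only in the (negligible) second sum, and its zero term $-\bar{\wt\chi}(l)\,y^{\beta_0}/(\beta_0\vph(kd))$ has cancelled between $\psi(y;kd,l)$ and $\psi(y;k,l)/\vph(d)$. If instead $\wt{q}\nmid k$ but $\wt{q}\mid kd$, then $k\ne k_1$, and since $k_1\mid kd$ is forbidden by the $\#$-restriction, $\wt{q}$ is a proper divisor of $k_1\ge K_0$; by Theorem~3 of \cite{LW} we then have $\beta_0\le 1-\pi/(0.4923\,K_0^{1/2}\log^2 K_0)$, so the total contribution of the zero, $(y^{\beta_0}/\beta_0)\sum_{d\le Q/k}1/\vph(kd)\le c_1(y^{\beta_0}/\beta_0)\log x$, is $\ll e^{-15}y/\log^2 y$ --- absorbed exactly as in Corollary~\ref{cor231} and the case $k_0<K_0$ of Lemma~\ref{lm24}.

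The remaining estimate is bookkeeping. The second sum above has modulus at most $\vph(k)\,\omega(d)\log y$, so after $\mu^2(k)\max_l$ and summation it contributes $\ll\log x\sum_{d\le Q}\omega(d)/\vph(d)\ll\log^3 x$, negligible. In the first sum a character of conductor $f=f_\chi$ has $f\mid kd$, $f\nmid k$, hence $f=f_kf_d$ with $f_k=\gcd(f,k)\mid k$, $f_d\mid d$, $f_d>1$; replacing $\psi(y,\chi)$ by $\psi(y,\chi^*)$ for the inducing primitive $\chi^*$ (an $O(\omega(kd)\log y)$ error summing to $\ll Q\log^2 x$), writing $d=f_d e$ and using $\sum_{e\le Q/(kf_d)}1/\vph(e)\le c_1\log x$, the first sum is at most
\begin{equation*}
c_1\log x\sum_{\substack{f\le Q\\ f\nmid k}}\frac{1}{\vph(f)}\;\sideset{}{^*}\sum_{\chi\bmod f}\abs{\psi(y,\chi)}.
\end{equation*}
Apart from the factor $c_1\log x$ and the restriction $f\nmid k$ (which only removes terms), this is precisely the sum over primitive characters bounded in the proof of Theorem~1.4 of \cite{Ymd1}, on which Lemma~\ref{lm24} rests; that argument gives $O(y/\log^{4}x)$, and $c_1\log x\cdot O(y/\log^{4}x)=O(y/\log^{3}x)$ lies well below $1.9\,e^{-9}y/\log^2 y$ since $\log x>e^{36}$, the deleted terms $f\mid k$ leaving ample room. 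Together with the previous paragraph this yields the required $\psi$-bound, and the lemma follows. I expect the exceptional-zero analysis of the second paragraph to be the main obstacle: the assumption $k\mid k_1$ and the $\#$-restriction are imposed precisely so that the Siegel zero either cancels in the difference or, being a divisor of the large modulus $k_1\ge K_0$, contributes negligibly, and casting the orthogonality identity in the form that makes this visible is the real work; once that is done, everything else reproduces Lemma~\ref{lm24}, with the constants tracked by PARI-GP as elsewhere in the paper.
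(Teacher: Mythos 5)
Your proposal follows essentially the same route as the paper: factor the characters modulo $kd$ so that the difference $\psi(y;kd,l)-\psi(y;k,l)/\vph(d)$ involves only characters with a nontrivial component modulo $d$ (equivalently, conductor not dividing $k$), observe that the hypothesis $k\mid k_1$ together with the $\#$-restriction (either $k=k_1$ or $k_1\nmid kd$) excludes the exceptional character from that sum, bound the remaining primitive-character sums as in Theorem 1.4 of \cite{Ymd1}, and recover the $\pi$-estimate by the partial summation of Lemma \ref{lm24}. Your write-up is in fact more explicit than the paper's (which is very terse on the correction terms from primes dividing $d$ and on the bookkeeping), and the only spot where you diverge --- the hypothetical case $\wt{q}\nmid k$, $\wt{q}\mid kd$ with $\wt{q}$ a proper divisor of $k_1$ --- is vacuous in the paper's framework, where the exceptional modulus is the conductor of the primitive exceptional character.
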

\begin{proof}
We have
\begin{equation}
\psi(x; kd, l)-\frac{\psi(x; k, l)}{\vph(d)}=\sum_{\chi_1, \chi_2}\frac{1}{\vph(kd)}\sum_{\substack{\chi_1\pmod{k},\\ \chi_2\neq \chi_{0, d}\pmod{d}}}\bar\chi_1(l)\bar\chi_2(l)\psi(x; \chi_1\chi_2),
\end{equation}
where $\chi_{0, d}$ denotes the trivial character modulo $d$,
and observe that there exists no character $\chi_1\chi_2$ appearing in this sum induced from the exceptional one $\pmod{k_1}$ since $\chi_2$ is nontrivial
and either $k_1\nmid kd$ or $k=k_1$ holds.
Now, similarly to (\ref{eq24}), we have
\begin{equation}
\begin{split}
 &\sideset{}{^\#}\sum_{d\leq Q/k} \mu^2(q)\max_{l\pmod{kd}}\abs{\psi(y; kd, l)-\frac{\psi(y; k, l)}{\vph(d)}}\\
<&\frac{1.9e^{-9}y}{\log^3 y}
\end{split}
\end{equation}
for $x_2<y\leq x$.
The remaining argument essentially repeats the proof of the previous lemma.
\end{proof}

\section{An explicit Rosser-Iwaniec linear sieve}\label{RI}

In this section, we introduce an explicit version of Rosser-Iwaniec linear sieve.
We use the following notation: $A$ is a finite set of integers,
$\Omega_p$ a set of congruent classes modulo $p$
and $\rho(p)$ be a multiplicative arithmetic function which takes zero if $\Omega_p$ is empty,
\begin{equation*}
\begin{split}
A_p=& A\cap \Omega_p, A_d=\bigcap_{p\mid d}A_p, r(d)=\abs{A_d}-\frac{\rho(d)}{d}\abs{A},\\
S(A, P)=& \abs{A-\bigcup_{p\mid P} A_p}, V(P)=\prod_{p\mid P}\left(1-\frac{\rho(p)}{p}\right)\\
\intertext{and}
P(z)=& \prod_{p<z}p.
\end{split}
\end{equation*}

Now we can state that the sieve problem is to estimate $S(A, P)$ under the condition that $r(d)$ is small.
A special case is the case $\prod_{p\leq x} (1-\rho(p)/p)\sim C\log x$ for some constant $C$,
which is called linear.

Using Selberg's sieve, Jurkat and Richert\cite{JR} gave upper and lower bounds for the linear sieve.
Using Rosser's combinatorial argument in his unpublished manuscript, Iwaniec\cite{Iwa} improved their upper and lower bounds.
Moreover, an explicit version of Rosser-Iwaniec linear sieve is given in Chapter 9 in \cite{Nat} although
it requires an additional condition.
In \cite{Ymd2}, the author gave another explicit version of Rosser-Iwaniec linear sieve which can be
applied in more general cases.  Here we shall introduce Theorem 1.2 in \cite{Ymd2}.
The assumption in this theorem is satisfied, for instance, when $A$ is the set of odd integers
of the form $aq+b$ with $a, b$ fixed coprime integers and $q$ odd prime
and $\Omega_p$ consists at most one congruent class modulo $p$ for each prime $p$.
In particular, $A$ can be taken to be sets of integers the form $N-q$
with $N$ even and $q$ odd prime, which we shall consider in the following sections.

\begin{thm}\label{th31}
Assume that $\rho(p)\leq p/(p-1)$ and $\rho(2)=0$.
Then, for every $D, s>0$, we have
\begin{equation}
S(A, P(D^\frac{1}{s}))>X\left(V(P(D^\frac{1}{s}))-2\left(\frac{f_1(s)}{\log D}+\frac{255.84406}{\log^\frac{3}{2} D}\right)\right)-\abs{R(D, P(z))}
\end{equation}
and
\begin{equation}
S(A, P(D^\frac{1}{s}))<X\left(V(P(D^\frac{1}{s}))+2\left(\frac{F_1(s)}{\log D}+\frac{298.87013}{\log^\frac{3}{2} D}\right)\right)+\abs{R(D, P(z))}.
\end{equation}
where $f_1(s), F_1(s)$ are functions such that
$F_1(s)=2e^\gamma-s$ for $0\leq s\leq 3$ and
\begin{equation}
\begin{split}
F_1^\prime(s)=& -\frac{f(s-1)}{s-1}\text{ for }s\geq 3,\\
f_1^\prime(s)=& -\frac{F_1(s-1)}{s-1}\text{ for }s\geq 2,
\end{split}
\end{equation}
and $R(D, P)=\sum_{d\mid P, d\leq D}\abs{\mu^2(d)r(d)}$.
\end{thm}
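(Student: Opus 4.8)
The plan is to follow the combinatorial approach of Rosser and Iwaniec, tracking every constant explicitly. First I would set up Rosser's truncated sieve weights. Starting from the Buchstab identity
\begin{equation*}
S(A, P(z)) = S(A, P(w)) - \sum_{w\leq p<z} S(A_p, P(p)),
\end{equation*}
one iterates it, retaining or discarding each branch according to Rosser's rule --- cut a branch once the product of the primes already removed along it first exceeds $D^{1/2}$, the sign of the cut being fixed by the parity of the number of such primes for the lower bound and by the opposite parity for the upper bound. This expresses $S(A, P(D^{1/s}))$ as a divisor sum supported on squarefree $d\mid P(z)$ with $d\leq D$, so the total contribution of the remainders is bounded by $\sum_{d\mid P(z),\, d\leq D}\abs{\mu^2(d)r(d)}=R(D, P(z))$, which is exactly the $R$-term in the statement.

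Second, I would extract the main term. The density parts of the surviving branches sum, after the iteration, to $X\,V(P(z))$ times a combinatorial factor which converges, as $\log z\to\infty$ with $s=\log D/\log z$ held fixed, to a value controlled by the classical Rosser--Iwaniec sieve functions; the additive corrections $f_1(s)$ and $F_1(s)$ appearing in the theorem, which satisfy the delay-differential system given in the statement with $F_1(s)=2e^\gamma-s$ for small $s$, encode precisely this limit. Their elementary properties --- monotonicity, nonnegativity of $f_1$, the bound $F_1\leq 2e^\gamma$ --- I would quote from the standard theory.

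Third --- and this is where the real work lies --- I must make the gap between the finite Buchstab sum and its limit quantitative, reaching the stated errors $2\big(f_1(s)/\log D+255.84406/\log^{3/2}D\big)$ and $2\big(F_1(s)/\log D+298.87013/\log^{3/2}D\big)$. The classical argument gives only a qualitative $o(1)$ saving; to pin down the exponent $3/2$ and the numerical constants one passes to the Laplace transforms of the sieve functions, estimates the resulting contour integrals explicitly, and controls the replacement of sums over primes by integrals using the explicit Mertens-type estimates of Lemma \ref{lm22}. Bounding how many levels of the iteration actually contribute, together with the constant multiplying $\log^{-3/2}D$, is exactly what the accompanying PARI--GP computation carries out; I expect this step to consume the bulk of the proof.

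Finally I would assemble the pieces --- the main term $X\,V(P(D^{1/s}))$, the sieve-function corrections, and the remainder $R(D, P(z))$ --- checking that the hypotheses $\rho(p)\leq p/(p-1)$ and $\rho(2)=0$ enter only through the convergence of $V(P(z))$ and the fact that the sieve has dimension $1$. This is precisely Theorem 1.2 of \cite{Ymd2}, so within the present paper one simply invokes that result.
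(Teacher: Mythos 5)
The paper gives no proof of this theorem at all: it is simply quoted as Theorem 1.2 of \cite{Ymd2}, exactly as your final sentence says. Your preceding sketch of the Rosser--Iwaniec iteration is a reasonable outline of what that external proof must contain, but within this paper the operative step is just the citation, so your proposal matches the paper's approach.
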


\section{Framework of our sieve argument}

We use the following notation.
As we assumed in Theorem \ref{th1}, let $N$ be an even integer $\geq X_2=\exp\exp 36$.
Let $z=N^\frac{1}{8}$ and $y=N^\frac{1}{3}$.  Then $z>\exp\exp 33$ and $y>\exp\exp 34$.

We shall consider the set $A=\{N-p: p\leq N, p\nmid N\}$.
If $A$ contains at least one prime, then $N$ could be represented by the sum of two primes.
We set $\Omega_p$ to be the congruent class $0\pmod{p}$, so that $A_q=\{m: q\mid m\}$.
Moreover, let $r(d)=\abs{A_{d}}-\frac{\abs{A}}{\vph(d)}$ and $r_k(d)=\abs{A_{kd}}-\frac{\abs{A_k}}{\vph(d)}$
denote error terms.
Clearly we have $\abs{A}=\pi(N)-\omega(N)$ and $\abs{A_k}=\pi(N; k, N)-\omega(N; k, N)$, where $\omega(n; q, a)$
denotes the number of prime factors of $n$ which is equivalent to $a\pmod{q}$.

As Chen and other authors did, we introduce the other set
$B=\{N-p_1p_2p_3: z\leq p_1<y\leq p_2\leq p_3, p_1p_2p_3<N, (p_1p_2p_3, N)=1\}$
and obtain the following lower bound, which is Theorem 10.2 in \cite{Nat}.
\begin{lem}\label{lm41}
\begin{equation}
\pi_2(N)> S(A, P(z))-\frac{1}{2}\sum_{z\leq q<y}S(A_q, P(z))-\frac{1}{2}S(B, P(y))-2N^\frac{7}{8}-2N^\frac{1}{3}.
\end{equation}
\end{lem}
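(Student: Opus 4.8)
The plan is to produce the combinatorial inequality by the classical Chen-Ross-Nathanson weighting argument, applied to the particular sets $A$ and $B$ introduced above. First I would recall that $\pi_2(N)$ counts representations $N = p + P_2$ where $P_2$ has at most two prime factors, so it suffices to bound from below the number of integers $m = N - p \in A$ that are either prime or a product of two primes both exceeding $z = N^{1/8}$. Concretely, define the weight
\begin{equation}
W(m) = 1 - \frac{1}{2}\sum_{\substack{q \mid m \\ z \le q < y}} 1 - \frac{1}{2}\#\{(q_1,q_2,q_3) : m = q_1 q_2 q_3,\ z \le q_1 < y \le q_2 \le q_3\}
\end{equation}
for $m \in A$ with no prime factor $< z$, and show that $W(m) \le \mathbf{1}[m \text{ is prime or a } P_2 \text{ with both factors} \ge z]$. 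The key elementary case analysis: if $m < N$ has all prime factors $\ge z = N^{1/8}$, then $m$ has at most $7$ prime factors (since $N^{8/8} = N$), and one checks that whenever $m$ is \emph{not} of the desired form, the subtracted terms already sum to at least $1$, so $W(m) \le 0$; when $m$ is of the desired form, $W(m) \le 1$ trivially.

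Next I would sum $W(m)$ over $m \in A$ with $(m, P(z)) = 1$. By the definition of $S(A, P(z))$, the leading term contributes $\sum_{(m,P(z))=1} 1 = S(A,P(z))$. The second term becomes $\frac{1}{2}\sum_{z \le q < y}\#\{m \in A : q \mid m,\ (m, P(z)) = 1\}$; since $q \ge z$, requiring $q \mid m$ and $(m,P(z))=1$ is the same as $m \in A_q$ with $(m/q, P(z))$ having no prime factor $< z$, hence this is at most $\sum_{z \le q < y} S(A_q, P(z))$ up to the trivial adjustment. The third (cubic) term is dominated by $\frac{1}{2}\#\{m \in B : (m, P(y)) = 1\} = \frac{1}{2} S(B, P(y))$, because any $m = q_1 q_2 q_3$ counted there with all $q_i \ge z$ and $q_2, q_3 \ge y$ lies in $B$ and, after removing the constraint from $q_1$, is sifted by $P(y)$. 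Assembling these gives
\begin{equation}
\pi_2(N) \ge \sum_{\substack{m \in A \\ (m,P(z))=1}} W(m) \ge S(A, P(z)) - \frac{1}{2}\sum_{z \le q < y} S(A_q, P(z)) - \frac{1}{2} S(B, P(y)) - (\text{error}).
\end{equation}

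Finally I would account for the error term $-2N^{7/8} - 2N^{1/3}$. This absorbs the discrepancies between "$W(m) \le$ indicator" and the exact count of $P_2$'s, and between the sifted sets and the raw cardinalities: namely the contribution of $m$ divisible by a prime square $\ge z^2$ (at most $O(N^{7/8})$ such $m$), the boundary terms where $q_1 q_2 q_3$ is close to $N$ or where a factor coincides with $\sqrt{m}$, and the count of primes $p \le z$ or $p \mid N$ excluded from $A$ (at most $N^{1/3}$ such, crudely). The main obstacle is the bookkeeping in the case analysis of $W(m)$: one must verify carefully that in every configuration of an $m$ with all prime factors $\ge z$ that fails to be a $P_2$ with both factors large, the two subtracted sums genuinely total at least $1$ — this is where the choices $z = N^{1/8}$, $y = N^{1/3}$ matter, since they guarantee that an $m$ with $\ge 3$ prime factors and no two factors both $\ge y$ must have at least two factors in $[z, y)$, forcing the linear sum to be $\ge 1$, and that the factorization count in the cubic term is correctly normalized. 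Since this is exactly Theorem 10.2 of Nathanson's book, I would cite \cite[Chapter 10]{Nat} for the detailed verification.
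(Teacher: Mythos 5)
Your proposal is correct and matches the paper, which offers no proof of its own beyond the remark that the inequality \emph{is} Theorem 10.2 of Nathanson \cite{Nat}; your sketch of the underlying Chen weight $W(m)=1-\tfrac12\sum_{q\mid m,\,z\le q<y}1-\tfrac12\#\{m=q_1q_2q_3\}$, the case analysis using $z=N^{1/8}$, $y=N^{1/3}$, and the absorption of the non-squarefree and boundary contributions into the $-2N^{7/8}-2N^{1/3}$ term is exactly the standard argument behind that theorem.
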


So that, it suffices to give an lower bound for $S(A, P(z))$ and
upper bounds for $S(B, P(y))$ and $S(A_q, P(z))$ for each primes $q$ with $z\leq q<y$.

We set $x_2=\frac{e^{-100} N}{\log^4 N}$,
which coincides $x_2$ in Section \ref{SecPre} with $x=N$,
$K_0=\log^\dt x_2$ and let $Q_1=\log^{10} x_2$.
Moreover, we set $k_0$ to be the exceptional modulus defined as in Section \ref{SecPre}
and $k_1=k_0$ if $k_0\geq K_0$ and $k_1=0$ otherwise,

Let $q_1> q_2>\cdots> q_l$ be all prime factors of $k_1$
and $m_j=q_1q_2\cdots q_j, A^{(j)}=A_{m_j}$ and $P^{(j)}(x)=\prod_{p<x, p\nmid N, p\neq q_1, q_2, \ldots, q_j}p$
for $j=0, 1, 2, \ldots, l$.  We note that $m_0=1, A^{(0)}=A, P^{(0)}(x)=P(x)$.
Moreover, we write for brevity $V(x)=V(P(x))$ amd $V^{(j)}(x)=V(P^{(j)}(x))$.

As in the proof of Theorem 10.3 in \cite{Nat}, we deduce from (\ref{eq22}) that
\begin{equation}
V^{(j)}(x)=\frac{U^{(j)}_N}{\log x}\left(1+\frac{\theta}{5\log^2 x}\right)\left(1+\frac{8\theta\log x}{x}\right)
\end{equation}
for $j=0, 1, \ldots, l$ and $x\geq z$, where
\begin{equation}
U_N^{(j)}=2e^{-\gamma}\prod_{p>2}\left(1-\frac{1}{(p-1)^2}\right)\prod_{p>2, p\mid Nm_j}\frac{p-1}{p-2},
\end{equation}
so that $U_N^{(0)}=U_N$.
Moreover, we have
\begin{equation}\label{eq40a}
l<\frac{1.3841 \log(10\log N)}{\log\log(10 \log N)}<\frac{e\log\log N}{\log\log\log N}
\end{equation}
by (\ref{eq22}). Moreover, since $k_1\geq \log^\dt x_2>3\times 5\times 7\times 11\times \cdots \times 53$, we have
\begin{equation}
q_1\geq \max\{59, 2+\log\log N\}
\end{equation}
and therefore 
\begin{equation}\label{eq40b}
U_N^{(1)}\leq U_N\frac{q-1}{q-2}\leq U_N(1+\ep_0(N)),
\end{equation}
where $\ep_0(N)=\frac{1}{\max\{57, \log\log N\}}$.

We can easily see that, for any $d\mid P^{(j)}(x)$, $r_{m_j}(d)=\abs{A^{(j)}_d}-\frac{\abs{A^{(j)}}}{\vph(d)}$.
We have the following estimates.
\begin{lem}
We have
\begin{equation}\label{eq41a}
\abs{A}>\frac{N}{\log N}.
\end{equation}
For $j=0, 1, \ldots, l-1$, we have
\begin{equation}\label{eq41b}
\abs{r(m_j)}<\frac{e^{-8}N}{\log^3 N}
\end{equation}
and
\begin{equation}\label{eq41c}
\abs{r(m_l)}<\frac{0.19N}{\log^{2.3} N}.
\end{equation}
Moreover, for any integer $k$ dividing $k_1$, we have
\begin{equation}\label{eq41d}
\sideset{}{^\#}\sum_{d<Q/k}\mu^2(kd)\abs{r_{k}(d)}<\frac{1.1e^{-8}N}{\log^3 N},
\end{equation}
where $Q=\frac{x_2}{\log^{10} x_2}$ and $d$ runs over integers such that $k_1\nmid kd$ if $k\neq k_1$.
\end{lem}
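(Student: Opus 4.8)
The plan is to derive each of the four estimates from a result already established. For \eqref{eq41a} one starts from $\abs{A}=\pi(N)-\omega(N)$ (the prime $2$ being excluded because $2\mid N$); using an explicit lower bound of the form $\pi(N)>\frac{N}{\log N}+\frac{N}{\log^2 N}$ and bounding $\omega(N)=O\bigl(\frac{\log N}{\log\log N}\bigr)$ by \eqref{eq21}, the term $\frac{N}{\log^2 N}$ swamps $\omega(N)$ once $N>\exp\exp 36$, so $\abs{A}>\frac{N}{\log N}$. For \eqref{eq41b}: since $k_1$ is squarefree, each $m_j$ with $j\le l-1$ is a proper divisor of $m_l=k_1$, so $k_1\nmid m_j$ and $m_j\le k_1\le Q_1=\log^{10}x_2$. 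From $\abs{A_{m_j}}=\pi(N;m_j,N)-\omega(N;m_j,N)$ and $\abs{A}=\pi(N)-\omega(N)$ one gets $r(m_j)=E_\pi(N;m_j,N)-\omega(N;m_j,N)+\frac{\omega(N)}{\vph(m_j)}$, and Corollary \ref{cor231} with $x=N$ gives $\abs{E_\pi(N;m_j,N)}<\frac{e^{-14}N}{\vph(m_j)\log^4 N}\le\frac{e^{-14}N}{\log^4 N}$; as the $\omega$-terms are $O\bigl(\frac{\log N}{\log\log N}\bigr)$ while $e^{-14}/\log N<e^{-8}$ for $N>\exp\exp 36$, this is $<\frac{e^{-8}N}{\log^3 N}$.

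Estimate \eqref{eq41c} is the crux. If $k_1=0$ then $l=0$, $m_l=1$, and $r(1)=0$, so assume $k_1\ne 0$; then $m_l=k_1$ is squarefree and odd (an even exceptional modulus has no effect on sifting by odd primes), with $K_0=\log^{\dt}x_2\le m_l\le Q_1$. A Siegel zero modulo $m_l$ cannot be ruled out, so the only lower bound available for $\abs{A_{m_l}}$ is $\abs{A_{m_l}}\ge 0$, while for the upper bound I would invoke Brun--Titchmarsh, $\abs{A_{m_l}}\le\pi(N;m_l,N)\le\frac{2N}{\vph(m_l)\log(N/m_l)}$, with $\log(N/m_l)=(1+o(1))\log N$ since $m_l\le Q_1$. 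Combining the Brun--Titchmarsh bound with $\abs{A}>\frac{N}{\log N}$ from \eqref{eq41a}, and the trivial bound with $\pi(N)<(1+o(1))\frac{N}{\log N}$, the spurious factor $2$ is absorbed and one gets $\abs{r(m_l)}<(1+o(1))\frac{N}{\vph(m_l)\log N}$. Finally, because $m_l$ is odd and squarefree, dropping the Euler factor at $2$ sharpens the classical estimate to $\frac{m_l}{\vph(m_l)}<\frac{e^\gamma}{2}\log\log m_l+O(1)$ (via \eqref{eq22}), so $\vph(m_l)\gg\frac{K_0}{\log\log K_0}\gg\frac{\log^{7/5}N}{\log\log\log N}$; since $1+\tfrac{7}{5}=\tfrac{12}{5}>\tfrac{23}{10}$, the bound $\abs{r(m_l)}=O\bigl(\frac{N\log\log\log N}{\log^{12/5}N}\bigr)$ stays below $\frac{0.19N}{\log^{23/10}N}$ for $N>\exp\exp 36$, the constant $0.19$ being confirmed by the explicit computation.

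For \eqref{eq41d}, one expands $\psi(N;kd,N)-\frac{\psi(N;k,N)}{\vph(d)}$ over pairs of Dirichlet characters $\chi_1\pmod{k}$, $\chi_2\pmod{d}$ with $\chi_2$ nontrivial, exactly as in the proof of Lemma \ref{lm25}; since $\chi_2$ is nontrivial and either $k_1\nmid kd$ or $k=k_1$, no character induced by the exceptional one appears, and Lemma \ref{lm25} yields $\sideset{}{^\#}\sum_{d<Q/k}\mu^2(k)\abs{\pi(N;kd,N)-\tfrac{\pi(N;k,N)}{\vph(d)}}<\frac{e^{-8}N}{\log^3 N}$. Writing $r_k(d)$ as this difference plus the correction $-\omega(N;kd,N)+\frac{\omega(N;k,N)}{\vph(d)}$ (each summand $O(\omega(N))$) and using $\mu^2(kd)\le\mu^2(k)$, the correction contributes at most $O\bigl(Q\,\omega(N)\bigr)=O\bigl(\frac{x_2\log N}{\log^{10}x_2\,\log\log N}\bigr)$, negligible against $\frac{e^{-8}N}{\log^3 N}$; hence $\sideset{}{^\#}\sum_{d<Q/k}\mu^2(kd)\abs{r_k(d)}<\frac{1.1e^{-8}N}{\log^3 N}$.

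The main obstacle is \eqref{eq41c}: because a Siegel zero modulo $m_l$ is unavoidable, $\abs{A_{m_l}}$ is controlled only from above via Brun--Titchmarsh and trivially from below, and the estimate only just goes through --- it hinges on the factor $2$ of Brun--Titchmarsh being cancelled against $\abs{A}>\frac{N}{\log N}$, on $m_l$ being odd (doubling the lower bound for $\vph(m_l)$), and on the exponent $\dt=7/5$ leaving a factor $\log^{1/10}N$ of slack over the target exponent $23/10$, so that the final constants are tight enough to require numerical verification.
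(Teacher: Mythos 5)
Your proposal matches the paper's proof in all essentials: \eqref{eq41a} from explicit bounds on $\pi(N)$ and $\omega(N)$, \eqref{eq41b} from Corollary \ref{cor231} since $k_1\nmid m_j$ for $j\le l-1$, \eqref{eq41c} from Brun--Titchmarsh combined with $\vph(m_l)\gg m_l/\log\log m_l$ and $m_l\ge K_0=\log^{\dt}x_2$ so that the exponent $1+\dt=2.4$ beats $2.3$, and \eqref{eq41d} from Lemma \ref{lm25} plus the $O(\omega(N))$ correction summed over $d<Q/k$. The only cosmetic difference is that the paper uses the Rosser--Schoenfeld bound $\vph(m_l)>m_l/(e^\gamma\log\log m_l+0.5)$ directly rather than your sharpened version for odd squarefree moduli; both suffice given the $\log^{0.1}N$ of slack.
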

\begin{proof}
We recall that $\abs{A}=\pi(N)-\omega(N)$ and
(\ref{eq41a}) easily follows from (\ref{eq21}) and Theorem 1, (3.1) of \cite{RS1} or Theorem 6.9, (6.5) of \cite{Dus}.

We observe that Corollary \ref{cor231} gives (\ref{eq41b}) for $j=0, 1, \ldots, l-1$.
Moreover, Brun-Titchmarsh's inequality in the form \cite[Theorem 2]{MV} gives
\begin{equation}
\begin{split}
\abs{r(m_l)}<& \left(\frac{2\log N}{\log(N/m_l)}-1\right)\frac{N}{\vph(m_l)\log N}+\omega(N)\\
<& (1+e^{-20})\frac{N(e^\gamma\log(\log\log N+\log\dt)+0.5)}{\log^{1+\dt} N}\\
<& \frac{0.19N}{\log^{2.3} N}
\end{split}
\end{equation}
since $m_l=k_1>K_0=\log^\dt N\geq \log^\dt X_2$ and $\vph(m_l)>m_l/(e^\gamma\log\log m_l+0.5)$ by Theorem 15, (3.41-42) of \cite{RS1}.
This proves (\ref{eq41c}).

Recalling that $\abs{A_k}=\pi(N; k, N)-\omega(N; k, N)$ and $\abs{A_{kd}}=\pi(N; kd, N)-\omega(N; kd, N)$,
we see that
\begin{equation}
r_k(d)=\pi(N; kd, N)-\frac{\pi(N; k, N)}{\vph(d)}+\frac{1.3841\theta\log N}{\log\log N}
\end{equation}
and therefore
\begin{equation}
\sideset{}{^\#}\sum_{d<Q/k}\mu^2(kd)\abs{r_{k}(d)}
<\frac{1.3841Q\log N}{\log\log N}+\sideset{}{^\#}\sum_{d<Q/k}\abs{\pi(N; kd, N)-\frac{\pi(N; k, N)}{\vph(d)}}.
\end{equation}
Now we apply Lemma \ref{lm25} with $x=N$ and obtain
\begin{equation}
\sideset{}{^\#}\sum_{d<Q/k}\mu^2(kd)\abs{r_{k}(d)}<\frac{1.3841D\log N}{\log\log N}+\frac{e^{-8}x}{\log^3 x}<\frac{1.1e^{-8}x}{\log^3 x}.
\end{equation}
\end{proof}

\section{Lower bounds for some sums over primes}\label{LS}

The purpose of this section is to obtain an lower bound for $S(A, P(z))$:
\begin{thm}\label{th4}
\begin{equation}
\begin{split}
S(A, P(z))>& \frac{U_N\abs{A}}{\log N}\\
& \times \left(4e^\gamma\log 3-0.5198\ep_0(N)-\frac{767.7471}{\log^\frac{1}{2} N}\right).
\end{split}
\end{equation}
\end{thm}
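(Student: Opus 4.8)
The plan is to apply the lower bound part of the explicit Rosser-Iwaniec linear sieve (Theorem \ref{th31}) to the set $A$, but with a twist to avoid the exceptional modulus $k_1$ when it is large. First I would recall that $A$ satisfies the hypotheses of Theorem \ref{th31}: we have $\rho(2)=0$ and $\rho(p)=p/(p-1)\leq p/(p-1)$ for odd primes $p\nmid N$, and $\rho(p)=0$ for $p\mid N$. If $k_1=0$ (no large exceptional modulus), then the error terms $r(d)$ for $d\leq Q$ are controlled by Lemma \ref{lm24}, so a direct application of the lower bound in Theorem \ref{th31} with $D=Q^{2}$ (roughly $D\approx x_2/\log^{20}x_2$, giving $D^{1/s}=z=N^{1/8}$, i.e. $s\approx 3$) yields
\begin{equation*}
S(A, P(z))>\abs{A}\left(V(z)-2\left(\frac{f_1(s)}{\log D}+\frac{255.84406}{\log^{3/2} D}\right)\right)-R(D, P(z)).
\end{equation*}
Using $V(z)=\frac{U_N}{\log z}(1+\theta/5\log^2 z)(1+8\theta\log z/z)$ from the displayed formula, together with $\log z=\frac18\log N$, $\log D\approx\frac12\log N$ (so $s\to 3$ and $f_1(3)=2e^\gamma\cdot$ the appropriate constant, noting $f_1$ agrees with the classical linear sieve function whose value at $3$ involves $\log 2$), the main term becomes $\frac{8U_N\abs{A}}{\log N}\cdot(\text{something})$; a careful bookkeeping of the constants (which the PARI-GP script handles) produces the factor $4e^\gamma\log 3$ and the $\frac{767.7471}{\log^{1/2}N}$ error, with the $\ep_0(N)$ term absent in this case.

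The harder case is $k_1=k_0\geq K_0$, where the Siegel zero modulo $k_1$ spoils the error estimate for those $d$ divisible by $k_1$. Here I would use the inclusion–exclusion device hinted at in the introduction: writing $q_1>\cdots>q_l$ for the prime factors of $k_1$ and using the sets $A^{(j)}=A_{m_j}$ with $m_j=q_1\cdots q_j$, one has the telescoping identity
\begin{equation*}
S(A, P(z))=\sum_{j=0}^{l-1}\left(S(A^{(j)}, P^{(j)}(z))-S(A^{(j+1)}, P^{(j+1)}(z))\right)+S(A^{(l)}, P^{(l)}(z)),
\end{equation*}
where removing $q_1,\dots,q_j$ from the sieve support lets us sieve $A^{(j)}$ by $P^{(j)}(z)$, whose sifting primes are coprime to $k_1/m_j$ — but the point is that the relevant error sums are now the "$\#$"-restricted sums controlled by Lemma \ref{lm25} and \eqref{eq41d}. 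For the final term $S(A^{(l)}, P^{(l)}(z))$, since $m_l=k_1$, all sifting primes avoid $k_1$ entirely, so Lemma \ref{lm24} (or \eqref{eq41b}) applies directly; for the consecutive differences one applies an upper bound sieve to $S(A^{(j+1)}, \cdot)$ and a lower bound sieve to $S(A^{(j)}, \cdot)$, each with error terms of the shape \eqref{eq41d}. The main terms $V^{(j)}(z)$ differ from $V(z)$ only through the factor $\prod_{p\mid m_j}(1-\rho(p)/p)^{-1}=\prod_{j'\leq j}\frac{q_{j'}-1}{q_{j'}-2}$, and \eqref{eq40b} together with \eqref{eq40a} (bounding $l$) shows the total discrepancy accumulated over all $j$ is absorbed into the $-0.5198\,\ep_0(N)$ term, since each $q_{j'}\geq q_1\geq\max\{59,2+\log\log N\}$ makes $\frac{q_{j'}-1}{q_{j'}-2}-1$ tiny and there are only $l=O(\log\log N/\log\log\log N)$ terms.

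The main obstacle I expect is the delicate constant-chasing in reconciling the two cases into a single clean bound: in the $k_1=0$ case the error is purely the sieve remainder, while in the $k_1\neq 0$ case one pays both (i) the $\ep_0(N)$ loss from the altered Euler products $U_N^{(j)}$ and (ii) an extra $\sim 2l$ sieve applications, each contributing its own $f_1(s)/\log D$ and $F_1(s)/\log D$ plus $\log^{-3/2}D$ terms, and one must verify that $2l$ times the $\log^{-3/2}D$ terms is still $o(\log^{-1/2}N)$ and fits under the stated $767.7471/\log^{1/2}N$. This requires using \eqref{eq40a} to see $l\ll\log\log N$, so $l/\log^{3/2}D\ll\log\log N/\log^{3/2}N=o(\log^{-1/2}N)$, and then the dominant contribution in both cases is governed by the same $f_1(s)$ and $F_1(s)$ values at $s$ near $3$, which is exactly where $f_1(s)=2e^\gamma s^{-1}\log(s-1)$-type behavior gives the $\log 3$ in the final constant $4e^\gamma\log 3$. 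The rest is the PARI-GP verification that the numerical constants multiply out as claimed; I would present the structural telescoping argument and the error accounting in detail, and cite the script for the final arithmetic.
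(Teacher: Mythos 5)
Your overall strategy --- inclusion--exclusion over the prime factors $q_1>\cdots>q_l$ of the exceptional modulus $k_1$, with upper- and lower-bound sieves applied alternately --- is the right one, but the identity you write down does not do the job. Your sum $\sum_{j=0}^{l-1}\bigl(S(A^{(j)},P^{(j)}(z))-S(A^{(j+1)},P^{(j+1)}(z))\bigr)+S(A^{(l)},P^{(l)}(z))$ telescopes trivially back to $S(A,P(z))$, and its $j=0$ summand still contains $S(A^{(0)},P^{(0)}(z))=S(A,P(z))$ itself; moreover, sieving $A^{(j)}$ by $P^{(j)}(z)$ leaves $q_{j+1},\dots,q_l$ in the sieve support, so remainder moduli $m_jd$ with $q_{j+1}\cdots q_l\mid d$ are still divisible by $k_1$ and the Siegel zero is not avoided (you also assert that the sifting primes of $P^{(j)}(z)$ are coprime to $k_1/m_j$, which is backwards --- they are coprime to $m_j$). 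The identity actually needed (Lemma \ref{lm42}) is the genuinely combinatorial one $S(A,P(z))=\sum_{i=0}^{l-1}(-1)^iS(A^{(i)},P^{(i+1)}(z))+(-1)^lS(A^{(l)},P^{(l)}(z))$: the deliberate mismatch between the index on $A$ and on $P$ (each summand counts elements of $A$ divisible by $q_1,\dots,q_i$ but by no other prime below $z$) guarantees that every remainder modulus $m_id$ with $d\mid P^{(i+1)}(z)$ omits the prime $q_{i+1}$ and hence is never divisible by $k_1$, which is exactly what makes Lemma \ref{lm25} and (\ref{eq41d}) applicable to every term.

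Second, your sieve parameters are off in a way that changes the main constant. You take $D=Q^2\approx x_2/\log^{20}x_2$, but the remainder sums are only controlled up to the level of distribution $Q\approx\sqrt{N}/\log^{14}N$ (Lemmas \ref{lm24} and \ref{lm25}); $D$ must stay below $Q$, and the proof takes $D=\sqrt{x_2}/(k_1\log^{10}x_2)$, so $\log D\approx\tfrac12\log N$ and $s=\log D/\log z\approx 4$ --- not $3$, and not the $8$ that your choice of $D$ would give. The constant $4e^\gamma\log 3$ comes precisely from $2e^\gamma\log(s-1)/\log D$ with $s=4$ and $\log D=\tfrac12\log N$; at $s\approx 3$ you would get $\log 2$, not $\log 3$. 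Finally, the two cases need not be separated: the identity holds with $l=0$ when $k_1=0$, and the $\ep_0(N)$ loss is quantified via (\ref{eq40b}) together with the telescoping of the main terms, $\sum_j(-1)^j\abs{A^{(j)}}V^{(j+1)}(z)+(-1)^l\abs{A^{(l)}}V^{(l)}(z)=\abs{A}V(z)(1+O(\log^{-1/2}N))$, which in turn requires the bounds (\ref{eq41b}) and (\ref{eq41c}) on $r(m_j)$ (Brun--Titchmarsh for $j=l$) that your sketch does not supply.
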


As mentioned in the introduction, we cannot directly estimate $S(A, P(z))$ due to possible existence of the exceptional zero $k_1$.
However, the following inclusion-exclusion identity allows us to overcome this obstacle.
\begin{lem}\label{lm42}
\begin{equation}
S(A, P(z))=\sum_{i=0}^{l-1}(-1)^i S(A^{(i)}, P^{(i+1)}(z))+(-1)^l S(A^{(l)}, P^{(l)}(z)).
\end{equation}
\end{lem}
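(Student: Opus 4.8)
The plan is to derive the identity by iterating the elementary ``splitting'' relation of sieve theory: for any finite set $B$ of integers, any squarefree $P$ and any prime $q\nmid P$ one has
\[
S(B,P)=S(B,Pq)+S(B_q,P),
\]
which is immediate on partitioning the elements of $B$ coprime to $P$ according to whether or not $q$ divides them (those not divisible by $q$ are exactly the elements coprime to $Pq$, and those divisible by $q$ are exactly the elements of $B_q$ coprime to $P$, the two descriptions being consistent since $q\nmid P$). The whole proof then rests on the observation that our two families $\{A^{(i)}\}$ and $\{P^{(i)}(z)\}$ interlock into a telescoping chain via $P^{(i)}(z)=q_{i+1}P^{(i+1)}(z)$ and $(A^{(i)})_{q_{i+1}}=A^{(i+1)}$.

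First I would record these two structural facts. Since $q_{i+1}$ is a prime factor of $k_1=k_0\le Q_1=\log^{10}x_2$, while $z=N^{1/8}>\exp\exp 33$, we have $q_{i+1}<z$; and $q_{i+1}$ occurs in the product defining $P^{(i)}(z)=\prod_{p<z,\ p\nmid N,\ p\neq q_1,\dots,q_i}p$ but is deleted in $P^{(i+1)}(z)$, so $P^{(i)}(z)=q_{i+1}P^{(i+1)}(z)$ provided $q_{i+1}\nmid N$. (If $q_{i+1}\mid N$ the step is degenerate: $q_{i+1}$ is then absent from both products, so $P^{(i)}(z)=P^{(i+1)}(z)$, while $A^{(i+1)}=A_{m_i q_{i+1}}\subseteq A_{q_{i+1}}=\varnothing$, because $q_{i+1}\mid N-p$ together with $q_{i+1}\mid N$ forces $p=q_{i+1}\mid N$; the step identity below still holds, with both sides equal to $S(A^{(i)},P^{(i+1)}(z))$.) Also, since $q_{i+1}\nmid m_i$ (the $q_j$ being distinct), $(A^{(i)})_{q_{i+1}}=A_{m_i}\cap\Omega_{q_{i+1}}=A_{m_{i+1}}=A^{(i+1)}$, and $q_{i+1}\nmid P^{(i+1)}(z)$ by construction.

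Now I would apply the splitting relation with $B=A^{(i)}$, $P=P^{(i+1)}(z)$, $q=q_{i+1}$, obtaining for $i=0,1,\dots,l-1$
\[
S\bigl(A^{(i)},P^{(i)}(z)\bigr)=S\bigl(A^{(i)},P^{(i+1)}(z)\bigr)-S\bigl(A^{(i+1)},P^{(i+1)}(z)\bigr).
\]
Abbreviating $T_i=S(A^{(i)},P^{(i)}(z))$ and $U_i=S(A^{(i)},P^{(i+1)}(z))$, this reads $T_i=U_i-T_{i+1}$, and since $T_0=S(A^{(0)},P^{(0)}(z))=S(A,P(z))$, a downward induction on $l$ yields $T_0=\sum_{i=0}^{l-1}(-1)^iU_i+(-1)^lT_l$, which is exactly the asserted identity. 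I would pin down the signs and the index of the last term by checking the cases $l=0$ (empty sum) and $l=1$ explicitly.

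There is no analytic difficulty here: the statement is a finite inclusion--exclusion over the prime factors of $k_1$, and the only point that needs any care is the bookkeeping --- verifying $q_{i+1}<z$ so that the factorisation $P^{(i)}(z)=q_{i+1}P^{(i+1)}(z)$ is legitimate (with the degenerate case $q_{i+1}\mid N$ handled as above) and tracking the alternating signs through the telescoping, which is why I would confirm the small cases rather than only writing down the general substitution.
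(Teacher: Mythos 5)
Your proof is correct and is essentially the paper's argument: the paper iterates the same Buchstab-type splitting identity, merely phrasing each partial alternating sum as a count of the elements of $A$ divisible by $q_1,\dots,q_{j}$ but by no other prime below $z$. Your version is if anything more careful, since you verify $q_{i+1}<z$ and handle the degenerate case $q_{i+1}\mid N$ explicitly, points the paper's informal "iterating this argument" glosses over.
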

\begin{proof}
$S(A, P^{1}(z))-S(A, P(z))$ counts the number of 
integers in $A$ divisible by $q_1$ but not divisible by any other primes below $z$.
$S(A^{(1)}, P^{(2)}(z))-S(A, P^{1}(z))+S(A, P(z))$ counts the number of 
integers in $A$ divisible by $q_1, q_2$ but not divisible by any other primes below $z$.
Iterating this argument, we see that 
$\sum_{i=0}^{l-1}(-1)^{l-1-i} S(A^{(i)}, P^{(i+1)}(z))+(-1)^l S(A, P(z))$
counts the number of integers in $A$ divisible by $q_1, q_2, \ldots, q_l$
but not divisible by any other primes below $z$,
which is equal to $S(A^{(l)}, P^{(l)}(z))$.
\end{proof}

As we will see below, each quantity can be estimated by sieve argument without encountering
the exceptional character.

Theorem \ref{th31} immediately gives the following estimates:
\begin{lem}\label{lm43}
Let
\begin{equation}
E_j=
\begin{cases}\sum_{d\mid P^{(j+1)}(z), d<D}r_{m_j}(d) &\text{if }j=0, 1, \ldots, l-1\\
\sum_{d\mid P^{(l)}(z), d<D}r_{m_l}(d) &\text{if } j=l.
\end{cases}
\end{equation}
Then we have
\begin{equation}
\begin{split}
& S(A^{(j)}, P^{(j+1)}(z))\\
>& \abs{A^{(j)}}\left[V^{(j+1)}(z)-U_N^{(j+1)}\left(\frac{f(s_j)}{\log D}+\frac{255.84406}{\log^\frac{3}{2} D}\right) \right]-\abs{E_j}
\end{split}
\end{equation}
and
\begin{equation}
\begin{split}
& S(A^{(j)}, P^{(j+1)}(z))\\
<& \abs{A^{(j)}}\left[V^{(j+1)}(z)+U_N^{(j+1)}\left(\frac{F(s_j)}{\log D}+\frac{298.87013}{\log^\frac{3}{2} D}\right) \right]+\abs{E_j}
\end{split}
\end{equation}
for $j=0, 1, \ldots, l-1$.  Moreover, we have
\begin{equation}
\begin{split}
& S(A^{(l)}, P^{(l)}(z))\\
>& \abs{A^{(l)}}\left[V^{(l)}(z)-U_N^{(l)}\left(\frac{f(s_l)}{\log D}+\frac{255.84406}{\log^\frac{3}{2} D}\right) \right]-\abs{E_l}
\end{split}
\end{equation}
and
\begin{equation}
\begin{split}
& S(A^{(l)}, P^{(l)}(z))\\
<& \abs{A^{(l)}}\left[V^{(l)}(z)+U_N^{(l)}\left(\frac{F(s_l)}{\log D}+\frac{298.87013}{\log^\frac{3}{2} D}\right) \right]+\abs{E_l}
\end{split}
\end{equation}
\end{lem}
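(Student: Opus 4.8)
The plan is to obtain all four inequalities from a single application of Theorem~\ref{th31} to the set $A^{(j)}$, with the sieve datum truncated so as to keep clear of the exceptional modulus. Fix $j$. For $0\le j\le l-1$ I would take $\Omega_p$ to be the class $N\bmod p$ for every prime $p\nmid Nm_jq_{j+1}$ and $\Omega_p=\emptyset$ when $p\mid Nm_jq_{j+1}$; for $j=l$ the same, but with $q_{j+1}$ dropped, so that $\Omega_p=\emptyset$ exactly for $p\mid Nm_l$. Then $\rho(2)=0$ (since $2\mid N$) and $\rho(p)=p/(p-1)$ otherwise, so the hypotheses of Theorem~\ref{th31} hold. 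Here the primes $q_1,\dots,q_j$ \emph{must} be left out of the sifting set, since each divides every element of $A^{(j)}=A_{m_j}$, so that including $q_i$ $(i\le j)$ would force $\rho(q_i)=q_i$, violating the hypothesis. With $D$ the level (which in the applications below has size about $\sqrt{x_2}$, so that $s:=s_j=\log D/\log z$ lies just below $4$) and $s$ chosen so that $D^{1/s}=z$, Theorem~\ref{th31} now bounds $S(A^{(j)},P^{(j+1)}(z))$ from above and below for $j\le l-1$ and $S(A^{(l)},P^{(l)}(z))$ for $j=l$; indeed primes $p$ with $\rho(p)=0$ affect neither $S$ nor $V$, so the effective sieve support is $P^{(j+1)}(z)$, resp.\ $P^{(l)}(z)$.

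It then remains to put the output of Theorem~\ref{th31} into the stated form. By definition the leading term $V(P(D^{1/s}))$ is $V^{(j+1)}(z)$ (resp.\ $V^{(l)}(z)$), whose Mertens constant is $U_N^{(j+1)}$ by the asymptotic formula $V^{(j+1)}(z)=\frac{U_N^{(j+1)}}{\log z}\bigl(1+\tfrac{\theta}{5\log^2 z}\bigr)\bigl(1+\tfrac{8\theta\log z}{z}\bigr)$ of the previous section. The defining relations $F_1(s)=2e^\gamma-s$ on $[0,3]$, $F_1'(s)=-f(s-1)/(s-1)$, $f_1'(s)=-F_1(s-1)/(s-1)$ amount to $f_1(s)=s(1-f(s))$ and $F_1(s)=s(F(s)-1)$, and with them the universal remainders $2\bigl(f_1(s)/\log D+255.84406/\log^{3/2}D\bigr)$ and $2\bigl(F_1(s)/\log D+298.87013/\log^{3/2}D\bigr)$ of Theorem~\ref{th31} are dominated by $U_N^{(j+1)}\bigl(f(s_j)/\log D+255.84406/\log^{3/2}D\bigr)$ and $U_N^{(j+1)}\bigl(F(s_j)/\log D+298.87013/\log^{3/2}D\bigr)$; this is a routine comparison, using that $s_j$ is close to $4$ (so $f(s_j),F(s_j)$ are close to $1$, whence $2s_j(1-f(s_j))\le U_N^{(j+1)}f(s_j)$ since $U_N^{(j+1)}\ge 2e^{-\gamma}\prod_{p>2}(1-(p-1)^{-2})$) and that the resulting $1/\log D$ slack overwhelms any $1/\log^{3/2}D$ deficit because $\log D$ is enormous for $N>\exp\exp 36$. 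Finally, the remainder produced by Theorem~\ref{th31} is $E_j$: for $d\mid P^{(j+1)}(z)$ one has $r(d)=r_{m_j}(d)$ (as $d$ is coprime to $Nm_j$), while $r(d)=0$ as soon as some prime factor of $d$ carries $\rho=0$, so $\abs{E_j}\le R(D,P^{(j+1)}(z))=\sum_{d\mid P^{(j+1)}(z),\,d\le D}\mu^2(d)\abs{r_{m_j}(d)}$ (and likewise for $j=l$).

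The one point genuinely worth care---and the reason for truncating the sieve exactly this way---is that none of these $l+1$ sieve problems involves the exceptional character modulo $k_1$, which is what will later make $\abs{E_j}$ admissible via $(\ref{eq41d})$. For $j\le l-1$, dropping $q_{j+1}$ forces $q_{j+1}\nmid d$, hence $k_1\nmid m_jd$ for every $d$ occurring, so $\sum_{d\mid P^{(j+1)}(z),\,d\le D}\mu^2(d)\abs{r_{m_j}(d)}$ is a partial sum of the one estimated in $(\ref{eq41d})$ (the modulus $m_j\le k_1$ being negligible against $Q$); for $j=l$, the product $P^{(l)}(z)$ already omits every prime factor $q_1,\dots,q_l$ of $k_1$, and---exactly as in the proof of Lemma~\ref{lm25}---the difference $\psi(N;m_ld,\ell)-\psi(N;m_l,\ell)/\varphi(d)$ involves only characters $\chi_1\chi_2$ with $\chi_2$ nontrivial modulo $d$, whose conductors do not divide $k_1$, so the exceptional character again cannot enter. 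There is no deeper obstacle: the four estimates really are immediate from Theorem~\ref{th31}, the only care needed being (i) to arrange $\rho$ to satisfy the hypotheses while still removing $q_{j+1}$, and (ii) to track the passage between the normalisations of $f_1,F_1$ and $f,F$.
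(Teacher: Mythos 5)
The paper offers no proof of this lemma beyond the phrase ``Theorem~\ref{th31} immediately gives the following estimates,'' so you are reconstructing the intended deduction, and your overall architecture is the right one: apply Theorem~\ref{th31} to $A^{(j)}$ with $\rho(p)=p/(p-1)$ for $p\nmid Nm_jq_{j+1}$ and $\rho(p)=0$ otherwise, so that the effective sifting support is $P^{(j+1)}(z)$ (resp.\ $P^{(l)}(z)$), identify $V(P(D^{1/s}))$ with $V^{(j+1)}(z)$ and the remainder with $E_j$, and observe that excluding $q_{j+1}$ (resp.\ working modulo $m_l=k_1$) is exactly what keeps the exceptional character out of the error sum so that (\ref{eq41d}) and Lemma~\ref{lm25} can later be invoked.

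The gap is in your translation of the secondary term. You read the lemma's $f(s_j),F(s_j)$ as the classical Jurkat--Richert functions (values near $1$ at $s_j\approx 4$) and bridge Theorem~\ref{th31}'s factor $2$ to the lemma's factor $U_N^{(j+1)}$ via the numerical domination $2s_j(1-f(s_j))\le U_N^{(j+1)}f(s_j)$. But the paper's subsequent use of the lemma shows that its $f,F$ \emph{are} the $f_1,F_1$ of Theorem~\ref{th31} with the subscript dropped: right after the lemma the author asserts $f(s_j),F(s_j)<0.0866$ for $s_j>3.9999$ (true for $f_1(s)=s-2e^{\gamma}\log(s-1)$, false for the classical $f(4)\approx 0.978$), and (\ref{eq48a}) rests on $V(z)-U_Nf(s)/\log D\approx 2e^{\gamma}U_N\log(s-1)/\log D$, an identity precisely when $f=f_1$. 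Under that reading your comparison degenerates to $2\le U_N^{(j+1)}$, which fails whenever $\prod_{p>2,\,p\mid Nm_{j+1}}\frac{p-1}{p-2}$ is small --- e.g.\ for $N$ a power of $2$ one has $U_N^{(j+1)}$ near $2e^{-\gamma}\prod_{p>2}\left(1-(p-1)^{-2}\right)\approx 0.74$. So your argument establishes a strictly weaker inequality (with $\approx 0.98/\log D$ where $\approx 0.087/\log D$ is needed) that cannot feed into (\ref{eq48a}). The deduction the paper intends is a direct transcription in which the ``$2$'' of Theorem~\ref{th31} is the density constant of the sieve problem at hand, namely the coefficient $U_N^{(j+1)}$ in $V^{(j+1)}(z)\sim U_N^{(j+1)}/\log z$; as Theorem~\ref{th31} is literally quoted, the lemma does not follow from it for all $N$, and this mismatch should be flagged rather than patched by the domination you propose. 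A smaller slip: you note $\abs{E_j}\le R(D,P^{(j+1)}(z))$, but that inequality points the wrong way for concluding with $-\abs{E_j}$ in the lower bound, since Theorem~\ref{th31} only yields $S>\mathrm{main}-R$; the honest remainder is the absolute sum $R$, which is what the paper in effect bounds anyway.
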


Let $D=\frac{\sqrt{x_2}}{k_1\log^{10} x_2}$ and $s_j=\frac{\log D/m_j}{\log z}$.
Since $m_j<k_1<\log^{10} N$, we have
\begin{equation}\label{eq42a}
\log D>\frac{\log N}{2}-22\log\log N-50
\end{equation}
and
\begin{equation}\label{eq42b}
4-\frac{8(32\log\log N+50)}{\log N}<s_1<4.
\end{equation}

We majorize $\abs{E_j}$ for each $j=0, 1, \ldots, l-1$ as well as for $j=l$.
It is almost trivial that $D<Q$ and therefore (\ref{eq41d}) immediately gives
\begin{equation}
\abs{E_j}<\frac{1.3841D\log N}{\log\log N}+\frac{1.1e^{-8}N}{\log^3 N}<\frac{1.2e^{-8}N}{\log^3 N}
\end{equation}
for each $j=0, 1, \ldots, l-1$ as well as $j=l$.

Substituting this estimate into the inequalities in Lemma \ref{lm43} gives
\begin{equation}\label{eq43a}
\begin{split}
& S(A^{(j)}, P^{(j+1)}(z))\\
>& \abs{A^{(j)}}\left[V^{(j+1)}(z)-U_N^{(j+1)}\left(\frac{f(s_j)}{\log D}+\frac{255.84406}{\log^\frac{3}{2} D}\right) \right]\\
& -\frac{1.2e^{-8}N}{\log^3 N}.
\end{split}
\end{equation}
and
\begin{equation}\label{eq43b}
\begin{split}
& S(A^{(j)}, P^{(j+1)}(z))\\
<& \abs{A^{(j)}}\left[V^{(j+1)}(z)+U_N^{(j+1)}\left(\frac{F(s_j)}{\log D}+\frac{298.87013}{\log^\frac{3}{2} D}\right) \right]\\
& +\frac{1.2e^{-8}N}{\log^3 N}.
\end{split}
\end{equation}
Similar estimates also holds for $S(A^{(l)}, P^{(l)}(z))$.

We see that $f(s_j), F(s_j)<0.0866$ for our values $s_j$ since $s_j>4-\frac{11\log\log N}{\log N}>3.9999$.
Hence, substituting (\ref{eq43a}) and (\ref{eq43b}) (and similar estimates for $S(A^{(l)}, P^{(l)}(z))$)
into Lemma \ref{lm42}, using (\ref{eq40a}) and observing that
the error terms are
\begin{equation}
\frac{1.2e^{-8}N\log\log N}{\log^3 N\log\log\log N}<\frac{e^{-23}N}{\log^{2.5} N},
\end{equation}
we obtain
\begin{equation}\label{eq44}
\begin{split}
S(A, P(z))>&\sum_{j=0}^{l-1}(-1)^j\abs{A^{(j)}}V^{(j+1)}(z)+(-1)^l\abs{A^{(l)}}V^{(l)}(z)\\
&-\abs{A}U_N^{(1)}\left(\frac{f(s_1)}{\log D}+\frac{255.84406}{\log^\frac{3}{2} D}\right)\\
&-\left(\sum_{j=1}^{l-1}\abs{A^{(j)}}U_N^{(j+1)}+\abs{A^{(l)}}U_N^{(l)}\right)\\
&\quad \times \left(\frac{0.0866}{\log D}+\frac{298.87013}{\log^\frac{3}{2} D}\right)\\
&-\frac{e^{-23}N}{\log^{2.5} N}.
\end{split}
\end{equation}

Now we shall evaluate each line in (\ref{eq44}).  We shall begin by showing that
\begin{equation}\label{eq45}
\sum_{j=0}^{l-1}(-1)^j\abs{A^{(j)}}V^{(j+1)}(z)+(-1)^l\abs{A^{(l)}}V^{(l)}(z)
=\abs{A}V(z)\left(1+\frac{e^{-27}\theta}{\log^\frac{1}{2} N}\right).
\end{equation}

We divide each term in the left-hand side of (\ref{eq45}) and obtain
\begin{equation}\label{eq46}
\begin{split}
& \sum_{j=0}^{l-1}(-1)^j\abs{A^{(j)}}V^{(j+1)}(z)+(-1)^l\abs{A^{(l)}}V^{(l)}(z)\\
=& \sum_{j=0}^{l-1}(-1)^j\frac{\abs{A}}{\vph(m_j)}V^{(j+1)}(z)+(-1)^l\frac{\abs{A}}{\vph(m_l)}V^{(l)}(z)\\
& +\sum_{j=0}^{l-1}(-1)^jr(m_j)V^{(j+1)}(z)+(-1)^l r(m_l)V^{(l)}(z).
\end{split}
\end{equation}
We write $\vph^*(N)=N\prod_{p\mid N}\frac{p-2}{p}$.  Since
\begin{equation}
\frac{\abs{A}}{\vph(m_j)}V^{(j+1)}(z)=\frac{\abs{A}V(z)}{\vph^*(m_j)}\left(1+\frac{1}{q_{j+1}-2}\right),
\end{equation}
we have
\begin{equation}
\sum_{j=0}^{l-1}(-1)^j\frac{\abs{A}}{\vph(m_j)}V^{(j+1)}(z)=\abs{A}V(z)\left(1+\frac{(-1)^j}{\vph^*(m_{j+1})}\right)
\end{equation}
and therefore
\begin{equation}\label{eq47}
\sum_{j=0}^{l-1}(-1)^j\frac{\abs{A}}{\vph(m_j)}V^{(j+1)}(z)+(-1)^l\frac{\abs{A}}{\vph(m_l)}V^{(l)}(z)=\abs{A}V(z).
\end{equation}
Substituting (\ref{eq47}), (\ref{eq41b}), (\ref{eq41c}) into (\ref{eq46}), we obtain
\begin{equation}\label{eq45a}
\begin{split}
& \sum_{j=0}^{l-1}(-1)^j\abs{A^{(j)}}V^{(j+1)}(z)+(-1)^l\abs{A^{(l)}}V^{(l)}(z)\\
=& \abs{A}V(z)+\frac{0.193\theta N}{\log^{2.3} N}V^{(l)}(z)\\
\end{split}
\end{equation}

Using Theorem 15, (3.41-42) of \cite{RS1} again, we have
\begin{equation}
\begin{split}
\frac{V^{(l)}(z)}{V(z)}\leq\prod_{j=1}^{l}\frac{q_j-1}{q_j-2}<& \prod_{j=1}^{l}\frac{q_j}{q_j-1}\prod_p\frac{(q_j-1)^2}{q_j(q_j-2)}\\
<& 1.51479\left(e^\gamma\log\log k_1+\frac{5}{2\log\log k_1}\right)\\
<& 4\log\log\log N.
\end{split}
\end{equation}
Hence, (\ref{eq41a}) and (\ref{eq45a}) gives
\begin{equation}
\begin{split}
& \sum_{j=0}^{l-1}(-1)^j\abs{A^{(j)}}V^{(j+1)}(z)+(-1)^l\abs{A^{(l)}}V^{(l)}(z)\\
=& \abs{A}V(z)+\frac{0.772\theta N\log\log\log N}{\log^{2.3} N}V(z)\\
=& \abs{A}V(z)+\frac{e^{-27}\theta N}{\log^{1.5} N}V(z)\\
=& \abs{A}V(z)\left(1+\frac{e^{-27}\theta}{\log^\frac{1}{2} N}\right),
\end{split}
\end{equation}
which is (\ref{eq45}).

Next we shall estimate the second line of (\ref{eq44}). 
Since Lemma \ref{lm22} gives
\begin{equation}
V(z)-\frac{U_N f(s)}{\log D}>U_N\left(\frac{2e^\gamma\log(s-1)}{\log D}-\frac{1}{4\log^3 z}\right),
\end{equation}
We see that $\log D>(0.5-e^{-16})\log N$ and $\log(s_1-1)>\log 3-\frac{1}{e^9\log^\frac{1}{2}N}$
by (\ref{eq42a}) and (\ref{eq42b}).  Thus we have
\begin{equation}\label{eq48a}
\begin{split}
&U_N^{(1)}\left(\frac{f(s_1)}{\log D}+\frac{255.84406}{\log^\frac{3}{2} D}\right)\\
<& V(z)+U_N\left(-\frac{2e^\gamma\log(s_1-1)}{\log D}+\frac{1}{4\log^3 z}+
\frac{\ep_0(N)f(s_1)}{\log D}+\frac{255.84406(1+\ep_0(N))}{\log^\frac{3}{2} D}\right)\\
<& V(z)+U_N\left(-\frac{2e^\gamma\log(s_1-1)}{\log D}+\frac{128}{\log^3 N}+
\frac{0.1733\ep_0(N)}{\log N}+\frac{736.33191}{\log^\frac{3}{2} N}\right)\\
<& V(z)+U_N\left(-\frac{4e^\gamma\log 3-0.1733\ep_0(N)}{\log N}+\frac{736.33192}{\log^\frac{3}{2} N}\right).
\end{split}
\end{equation}

Finally we shall evaluate the term in the third and the fourth line of (\ref{eq44}).
With the aid of (\ref{eq41a}) we have
\begin{equation}\label{eq49}
\begin{split}
& \sum_{j=1}^{l-1}\abs{A^{(j)}}U_N^{(j+1)}+\abs{A^{(l)}}U_N^{(l)}\\
&\qquad <2U_N\ep_0(N)\abs{A}+\left(\frac{e^{-8}N}{\log^3 N}+\frac{0.19N}{\log^{2.3} N}\right)U_N^{(l)}\\
&\qquad <U_N\left(2\ep_0(N)\abs{A}+\frac{0.191N}{\log^{2.3} N}\prod_{p\mid k_1}\frac{p-1}{p-2}\right)\\
&\qquad <U_N\left(2\ep_0(N)\abs{A}+\frac{N\log\log\log N}{\log^{2.3} N}\right)\\
&\qquad <U_N\abs{A} \left(2\ep_0(N)+\frac{e^{-9}}{\log N}\right)\\
\end{split}
\end{equation}
observing that
\begin{equation}
\begin{split}
& U_N\left(\sum_{j=1}^{l}\frac{q_j-1}{\vph^*(m_{j-1})(q_j-2)}+\frac{1}{\vph^*(k_1)}\right)\\
&\qquad =\frac{1}{q_1-2}\left(1+\frac{1}{q_2-2}+\frac{1}{(q_2-2)(q_3-2)}+\cdots \right)\\
&\qquad\leq\frac{2U_N}{q_1-2}\\
&\qquad<2U_N\ep_0(N).
\end{split}
\end{equation}
Since it follows from (\ref{eq42a}) that
\begin{equation}
\frac{0.0866}{\log D}+\frac{298.87013}{\log^\frac{3}{2} D}<\frac{0.17321}{\log N}+\frac{845.33239}{\log^\frac{3}{2} N},
\end{equation}
we conclude that
\begin{equation}\label{eq48b}
\begin{split}
& \left(\sum_{j=1}^{l-1}\abs{A^{(j)}}U_N^{(j+1)}+\abs{A^{(l)}}U_N^{(l)}\right)\left(\frac{0.0866}{\log D}+\frac{298.87013}{\log^\frac{3}{2} D}\right)\\
& <U_N\abs{A}\ep_0(N)\left(\frac{0.3465}{\log N}+\frac{1790.6648}{\log^\frac{3}{2} N}\right).
\end{split}
\end{equation}
Substituting (\ref{eq45}), (\ref{eq48a}) and (\ref{eq48b}) into (\ref{eq44}), we obtain Theorem \ref{th4}.

\section{Upper bounds for some sums over primes}\label{US}

In this section, we shall obtain an upper bound for $\sum_{z\leq q<y}S(A_q, P, z)$:
\begin{thm}\label{th5}
\begin{equation}
\begin{split}
\sum_{z\leq q<y}S(A_q, P(z))<& \frac{U_N\abs{A}}{\log N}\\
&\times \left(4e^\gamma\log 6(1+\ep_0(N))+\frac{993.2507}{\log^\frac{1}{2} N}\right).
\end{split}
\end{equation}
\end{thm}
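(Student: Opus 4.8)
The plan is to bound $S(A_q,P(z))$ for each prime $q$ with $z\le q<y$ by the upper-bound half of Theorem \ref{th31} and then sum over $q$. First dispose of the case $q\mid N$: then $q\mid N-p$ forces $p=q$, which is excluded by $p\nmid N$, so $A_q=\emptyset$ and $S(A_q,P(z))=0$. Hence assume $q\nmid N$; then $\abs{A_q}=\pi(N;q,N)-\omega(N;q,N)$, and $\abs{A}=\pi(N)-\omega(N)$ gives
\[
\abs{A_q}\le\frac{\abs{A}}{q-1}+\frac{\omega(N)}{q-1}+\abs{E_\pi(N;q,N)}.
\]
To steer clear of the exceptional modulus, assume $k_1>0$ and let $q_1$ be its largest prime factor, as in Section \ref{LS}. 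Since $q_1\le k_1\le\log^{10}x_2<z\le q$ we have $q\ne q_1$, so for every squarefree $d\mid P^{(1)}(z)$ the modulus $qd$ is coprime to $q_1$, whence $k_1\nmid qd$, and consequently the error terms $r_q(d)=\abs{A_{qd}}-\abs{A_q}/\vph(d)$ never involve the exceptional character. As $S(A_q,P(z))\le S(A_q,P^{(1)}(z))$ it suffices to bound $S(A_q,P^{(1)}(z))$. (If $k_1=0$, put $P^{(1)}(z)=P(z)$; Lemma \ref{lm24} then applies with no restriction on the modulus and the rest goes through verbatim.)

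Next apply the sieve. With $Q=\sqrt{x_2}/\log^{10}x_2$ as in Lemma \ref{lm24}, set $D_q=Q/q$ and $s_q=\log D_q/\log z$, so that $d\le D_q$ forces $qd\le Q$; since $\log Q=\frac12\log N-O(\log\log N)$ and $\log z=\frac18\log N$, one checks $\frac43-o(1)<s_q<3$ throughout $z\le q<y$. Applying Theorem \ref{th31} to $A_q$ with the residue class modulo $q_1$ deleted from the sieve, packaged exactly as Lemma \ref{lm43} packages the sets $A^{(j)}$, and using $V^{(1)}(z)\le V(z)(1+\ep_0(N))$ and $U_N^{(1)}\le U_N(1+\ep_0(N))$, gives
\[
S(A_q,P(z))<\abs{A_q}(1+\ep_0(N))\left(V(z)+U_N\left(\frac{F(s_q)}{\log D_q}+\frac{298.87013}{\log^{3/2}D_q}\right)\right)+\abs{E_q},
\]
with $E_q=\sum_{d\mid P^{(1)}(z),\,d<D_q}r_q(d)$. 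Because $s_q<3$ one has $F(s_q)=2e^\gamma-s_q$, while Lemma \ref{lm22} yields $V(z)=\frac{U_N}{\log z}(1+O(\log^{-2}N))$; together with $\frac{1}{\log z}=\frac{s_q}{\log D_q}$ this gives $V(z)+\frac{U_NF(s_q)}{\log D_q}=\frac{2e^\gamma U_N}{\log D_q}(1+O(\log^{-2}N))$. Summing the resulting main term over $z\le q<y$ and inserting $\abs{A_q}\le\abs{A}/(q-1)+\cdots$, the principal contribution is
\[
2e^\gamma U_N(1+\ep_0(N))\abs{A}\sum_{z\le q<y}\frac{1}{(q-1)(\log Q-\log q)}.
\]
By the explicit Mertens estimates of Lemma \ref{lm22} and the partial-summation inequality of Lemma \ref{lm21}, the substitution $u=\log q$ turns this sum into $\int_{\log N/8}^{\log N/3}\frac{du}{u(\log Q-u)}+O\!\left(\frac{\log\log N}{\log^2 N}\right)$, and an elementary partial-fraction evaluation gives $\int_{\log N/8}^{\log N/3}\frac{du}{u(\log Q-u)}=\frac{2\log 6}{\log N}+O\!\left(\frac{\log\log N}{\log^2 N}\right)$. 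This produces the asserted main term $\frac{U_N\abs{A}}{\log N}\,4e^\gamma\log 6\,(1+\ep_0(N))$.

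It remains to absorb all errors into $993.2507/\log^{1/2}N$. For $\sum_q\abs{E_q}$, write $r_q(d)=E_\pi(N;qd,N)-E_\pi(N;q,N)/\vph(d)+O(\omega(N))$; then $\sum_q\abs{E_q}$ is dominated by $\sum_q\sum_{d<D_q}\mu^2(qd)\max_l\abs{E_\pi(N;qd,l)}+\sum_q\max_l\abs{E_\pi(N;q,l)}\sum_{d<D_q}\mu^2(qd)/\vph(d)$. Each modulus $m=qd\le Q$ occurs with multiplicity at most $\omega(m)<1.3841\log m/\log\log m$ (Lemma \ref{lm22}), and $\sum_{d<D_q}\mu^2(qd)/\vph(d)<c_1(1+\log D_q)\ll\log N$; since the proof of Lemma \ref{lm24}, via (\ref{eq24}) and the ensuing partial summation (as in Corollary \ref{cor231}), in fact furnishes $\sum_{m\le Q,\,k_1\nmid m}\mu^2(m)\max_l\abs{E_\psi(y;m,l)}<\frac{379.64067\,y}{\log^{11/2}y}$ for $x_2<y\le N$, and hence $\sum_{m\le Q,\,k_1\nmid m}\mu^2(m)\max_l\abs{E_\pi(N;m,l)}\ll N/\log^{13/2}N$, both parts are $\ll N/\log^{9/2}N$, the $O(\omega(N))$ contribution being only $O(N^{1/2}\log N/\log\log N)$. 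The contributions of $\omega(N)/(q-1)$ and $\abs{E_\pi(N;q,N)}$ inside $\abs{A_q}$ are handled in the same way (for the latter, multiply the same Bombieri--Vinogradov average by the sieve factor $\ll1/\log N$), and $\sum_{z\le q<y}\frac{\abs{A}/(q-1)}{\log^{3/2}D_q}\ll\abs{A}/\log^{3/2}N$ by the same Mertens argument. Collecting these estimates gives Theorem \ref{th5}.

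The delicate step is this error analysis: one must arrange the joint sum over the extra prime $q$ and the sieve variable $d$ so that it collapses to a single Bombieri--Vinogradov-type average, paying only the harmless multiplicity $\omega(m)\ll\log N/\log\log N$ --- precisely the mechanism of the proofs of Lemmas \ref{lm24} and \ref{lm25}, now with the modulus $k$ replaced by the prime $q$ --- and then track all constants through the $\psi\to\theta\to\pi$ passages and the Mertens integral to land exactly on $993.2507$. By contrast, the one genuinely new device is the elementary observation that deleting the single prime $q_1\mid k_1$ from the sifting set both purges the exceptional character from every error term and costs no more than the factor $1+\ep_0(N)$, thanks to $q_1\ge\max\{59,\,2+\log\log N\}$.
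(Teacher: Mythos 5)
Your proposal is correct and follows essentially the same route as the paper: pass from $P(z)$ to $P^{(1)}(z)$ to purge the exceptional character, apply the upper-bound half of Theorem \ref{th31} with level $D/q$ so that $F(s_q)=2e^\gamma-s_q$ and $s_q/\log(D/q)=1/\log z$ collapse the main term to $2e^\gamma/\log(D/q)$, evaluate $\sum_q\frac{1}{(q-1)\log(D/q)}$ by Mertens/partial summation to get $2\log 6/\log N$, and absorb the remainders via the Bombieri--Vinogradov-type Lemma \ref{lm24} (your multiplicity bound $\omega(m)$ for the moduli $m=qd$ is even an overcount, since $d\mid P^{(1)}(z)$ and $q\ge z$ make the factorization unique). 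The only part left implicit is the numerical tracking to $993.2507$, which is exactly the bookkeeping the paper carries out.
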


In this mission, it is much easier to break the obstacle due to possible existence of exceptional modulus;
it suffices to give an upper bound for $\sum_{z\leq q<y}S(A_q, P^{(1)}(z))$
since it is clear that $\sum_{z\leq q<y}S(A_q, P(z))\leq \sum_{z\leq q<y}S(A_q, P^{(1)}(z))$.

In this section, we set $D=\frac{x_2^\frac{1}{2}}{\log^{10} x_2}$ and $s_q=\frac{\log (D/q)}{\log z}$.
Theorem \ref{th31} immediately gives
\begin{equation}
\begin{split}
S(A_q, P^{(1)}(z))<& \abs{A_q}\left[V^{(1)}(z)+U_N^{(1)}\left(\frac{F(s_q)}{\log\frac{D}{q}}+\frac{298.87013}{\log^\frac{3}{2}\frac{D}{q}}\right) \right]\\
& +\sum_{d<D/q, d\mid P^{(1)}(z)}\abs{r_q(d)},
\end{split}
\end{equation}
where $r_q(d)=\abs{A_{qd}}-\abs{A_q}/\vph(d)$, and therefore the sum $\sum_{z\leq q<y}S(A_q, P, z)$ can be bounded from above by
\begin{equation}\label{eq51}
\begin{split}
&\sum_{z\leq q<y}\abs{A_q}\left[V^{(1)}(z)+U_N^{(1)}\left(\frac{F(s_q)}{\log\frac{D}{q}}+\frac{298.87013}{\log^\frac{3}{2}\frac{D}{q}}\right) \right]\\
& +\sum_{z\leq q<y}\sum_{d<D/q, d\mid P^{(1)}(z)}\abs{r_q(d)}
\end{split}
\end{equation}

Using Lemma \ref{lm24}, we obtain that the sum over the error terms is
\begin{equation}\label{eq52}
\begin{split}
&\sum_{z\leq q<y}\sum_{d<D/q, d\mid P^{(1)}(z)}\abs{r_q(d)}\\
\leq & \sum_{z\leq q<y}\sum_{d<D/q, d\mid P^{(1)}(z)}\left(\abs{\pi(N; qd, N)-\frac{\pi(N; q, N)}{\vph(d)}}+\omega(N)\right)\\
\leq & \sum_{z\leq q<y}\sum_{d<D/q, d\mid P^{(1)}(z)}\left(\abs{E_\pi(N; qd, N)}+\abs{\frac{E_\pi(N; q, N)}{\vph(d)}}+\omega(N)\right)\\
< & \frac{1.2e^{-8}N}{\log^3 N}.
\end{split}
\end{equation}

Since $N^\frac{1}{8}<\frac{D}{y}<\frac{D}{q}<\frac{D}{z}<N^\frac{3}{8}$, we have $1<s_q<3$
and therefore $F(s_q)=2e^\gamma-s_q$.
Thus we have
\begin{equation}
\begin{split}
V^{(1)}(z)+U_N^{(1)}\frac{F(s_q)}{\log\frac{D}{q}}=& U_N^{(1)}\frac{2e^\gamma}{\log\frac{D}{q}}+V^{(1)}(z)-\frac{U_N^{(1)}}{\log z}\\
<& U_N^{(1)}\left(\frac{2e^\gamma}{\log\frac{D}{q}}+\frac{1}{5\log^2 z}\right).
\end{split}
\end{equation}
This gives
\begin{equation}\label{eq53}
\begin{split}
& \sum_{z\leq q<y}\abs{A_q}\left[V^{(1)}(z)+U_N^{(1)}\left(\frac{F(s_q)}{\log\frac{D}{q}}+\frac{298.87013}{\log^\frac{3}{2}\frac{D}{q}}\right) \right]\\
& \leq U_N^{(1)}\sum_{z\leq q<y}\abs{A_q}\left(\frac{2e^\gamma}{\log\frac{D}{q}}+\frac{298.87013}{\log^\frac{3}{2}\frac{D}{q}}+\frac{1}{5\log^2 z}\right).
\end{split}
\end{equation}

Since $\abs{A_q}\leq \frac{\abs{A}+\omega(N)}{q-1}+E_\pi(N; q, N)$, we have
\begin{equation}\label{eq54}
\begin{split}
& \sum_{z\leq q<y}\abs{A_q}\left(\frac{2e^\gamma}{\log\frac{D}{q}}+\frac{298.87013}{\log^\frac{3}{2}\frac{D}{q}}+\frac{1}{5\log^2 z}\right)\\
& \leq \frac{z(\abs{A}+\omega(N))}{z-1}\sum_{z\leq q<y}\frac{2e^\gamma}{q\log\frac{D}{q}}+\frac{298.87013}{q\log^\frac{3}{2}\frac{D}{q}}+\frac{1}{5q\log^2 z}\\
&\quad +\left(\frac{2e^\gamma}{\log\frac{D}{y}}+\frac{298.87013}{\log^\frac{3}{2}\frac{D}{y}}+\frac{1}{5\log^2 z}\right)\sum_{z\leq q<y}E_\pi(N; q, N).
\end{split}
\end{equation}

Using Lemma \ref{lm24}, we have $\sum_{z\leq q<y}E_\pi(N; q, N)<\frac{e^{-8}N}{\log^3 N}$
and therefore (\ref{eq51}) is at most
\begin{equation}\label{eq55}
\begin{split}
&\sum_{z\leq q<y}\left(\frac{\abs{A}+\omega(N)}{\vph(q)}+E_\pi(N; q, N)\right)\left[V^{(1)}(z)+U_N^{(1)}\left(\frac{F(s_q)}{\log\frac{D}{q}}+\frac{298.87013}{\log^\frac{3}{2}\frac{D}{q}}\right) \right]\\
&<U_N^{(1)}\sum_{z\leq q<y}\left(\frac{\abs{A}+\omega(N)}{\vph(q)}+E_\pi(N; q, N)\right)\left(\frac{2e^\gamma}{\log\frac{D}{q}}+\frac{298.87013}{\log^\frac{3}{2}\frac{D}{q}}+\frac{1}{5\log^2 z}\right)\\
&<U_N^{(1)}\abs{A}\sum_{z\leq q<y}\frac{1}{q-1}\left(\frac{2e^\gamma}{\log\frac{D}{q}}+\frac{298.87013}{\log^\frac{3}{2}\frac{D}{q}}+\frac{1}{5\log^2 z}\right)\\
&\qquad +\frac{1.2e^{-8}U_N^{(1)}N}{\log^3 N}\left(\frac{2e^\gamma}{\log\frac{D}{y}}+\frac{298.87013}{\log^\frac{3}{2}\frac{D}{y}}+\frac{1}{5\log^2 z}\right)\\
&<U_N^{(1)}\abs{A}\left(2e^\gamma\sum_{z\leq q<y}\frac{1}{(q-1)\log\frac{D}{q}}+298.87013\sum_{z\leq q<y}\frac{1}{(q-1)\log^\frac{3}{2}\frac{D}{q}}+\frac{1}{5\log^2 z}\sum_{z\leq q<y}\frac{1}{q-1}\right)\\
&\qquad +\frac{e^{-4}U_N^{(1)}N}{\log^4 N}.
\end{split}
\end{equation}

We use Lemma \ref{lm21}, with the aid of (\ref{eq23}), to obtain
\begin{equation}
\begin{split}
\sum_{z\leq q<y}\frac{1}{q\log\frac{N^\frac{1}{2}}{q}}
<& \int_z^y \frac{dt}{t\log t\log\frac{N^\frac{1}{2}}{t}}+\frac{6}{\log N}\times\frac{1.001}{5\log^2 z} \\
<& \frac{2\log 6}{\log N}+\frac{76.9}{\log^3 N}
\end{split}
\end{equation}
and
\begin{equation}
\begin{split}
\sum_{z\leq q<y}\frac{1}{q\log^\frac{3}{2}\frac{N^\frac{1}{2}}{q}}
<& \int_z^y \frac{dt}{t\log t\log^\frac{3}{2}\frac{N^\frac{1}{2}}{t}}+\frac{6^\frac{3}{2}}{\log^\frac{3}{2} N}\times \frac{1.00001}{5\log^2 z}\\
<& \frac{4\sqrt{2}\left(\sqrt{3}-\sqrt{\frac{4}{3}}\right)}{\log^\frac{3}{2} N}+\frac{188.2}{\log^\frac{7}{2} N}.
\end{split}
\end{equation}
These inequalities, combined with $\log D>\frac{1}{2}\log N-12\log\log N-50>\frac{1}{2}\log N-14\log\log N$, give the upper bounds
\begin{equation}\label{eq56}
\begin{split}
\sum_{z\leq q<y}\frac{1}{(q-1)\log\frac{D}{q}}<& \frac{1}{1-\frac{29\log\log N}{\log N}}\left(\frac{2\log 6}{\log N}+\frac{76.9}{\log^3 N}\right)\\
<& \frac{2\log 6}{\log N}+\frac{215.02\log\log N}{\log^2 N}+\frac{76.91}{\log^3 N}
\end{split}
\end{equation}
and
\begin{equation}\label{eq57}
\begin{split}
\sum_{z\leq q<y}\frac{1}{q\log^\frac{3}{2}\frac{D}{q}}<& \frac{1}{1-\frac{29\log\log N}{\log N}}\left(\frac{4\sqrt{2}\left(\sqrt{3}-\sqrt{\frac{4}{3}}\right)}{\log^\frac{3}{2} N}+\frac{188.2}{\log^\frac{7}{2} N}\right)\\
<& \frac{3.26599}{\log^\frac{3}{2} N}+\frac{188.21}{\log^\frac{7}{2} N}.
\end{split}
\end{equation}
Moreover, (\ref{eq23}) immediately gives
\begin{equation}\label{eq58}
\sum_{z\leq q<y}\frac{1}{q-1}<\sum_{q\geq z}\frac{1}{q(q-1)}+\sum_{z\leq q<y}\frac{1}{q}<\log\frac{8}{3}+\frac{1}{3\log^2 N}.
\end{equation}

Combining (\ref{eq56})-(\ref{eq58}) with (\ref{eq55}), we obtain
\begin{equation}
\begin{split}
\sum_{z\leq q<y}S(A_q, P^{(1)}(z))<& \frac{U_N^{(1)}\abs{A}}{\log N}\\
&\times \left(4e^\gamma\log 6+\frac{976.1256}{\log^\frac{1}{2} N}\right)\\
\end{split}
\end{equation}
Thus, with the aid of (\ref{eq40b}), we prove Theorem \ref{th5}.

\section{An upper bound for a bilinear form}

We shall finish our sieve argument by obtaining an upper bound for $S(B, P(y))$:
\begin{thm}\label{th6}
Let $\ep$ be a positive real number $<0.01$.  Then
\begin{equation}
\begin{split}
& S(B, P(y))\\
\leq & 
\frac{c_2(1+\ep)NU_N}{\log^2 N}
\left(4e^\gamma(1+\ep_0(N))+\frac{860.16295}{\log^\frac{3}{2} N}\right)+\frac{e^{-138}N}{\ep \log^3 N}.
\end{split}
\end{equation}
\end{thm}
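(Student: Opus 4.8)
The plan is to bound $S(B,P(y))$, where $B=\{N-p_1p_2p_3: z\le p_1<y\le p_2\le p_3,\ p_1p_2p_3<N,\ (p_1p_2p_3,N)=1\}$, by fixing the smallest prime $p_1=q$ (so $z\le q<y$) and the partner $p_2=q'$ (with $y\le q'$), writing $m=qq'$, and observing that for each such pair the remaining factor $p_3$ ranges over primes in an interval, so the count of $n\in B$ with $n\equiv 0\pmod d$ for squarefree $d\mid P(y)$ is $\pi(\text{range}; dm', \dots)$ up to error terms. More precisely, for each fixed $m=qq'$ one sieves the set $B_m=\{N-mp_3\}$ by primes below $y$ with level of distribution $D$; since $\log(D/m)/\log y$ lies in $(0,3)$ one gets $F(s)=2e^\gamma-s$ and the $V$-term plus the main sieve term collapses essentially to $2e^\gamma/\log(D/m)$ just as in Section \ref{US}. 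Summing the resulting main terms over $q,q'$ with the weight $1/(\vph(q)\vph(q'))$ and replacing the double sum over primes by the corresponding double integral via Lemma \ref{lm21} and (\ref{eq23}) produces the constant $c_2$ (the standard $\int\!\!\int dt_1\,dt_2/(t_1 t_2 \log t_1 \log t_2 \log(N^{1/2}/t_1t_2))$ over the Chen region, which is the origin of the $c_2$ appearing in Nathanson's Theorem 10.5), and the $4e^\gamma$ factor and the $\log^{-3/2}$ correction $860.16295/\log^{3/2}N$ come out as in Theorem \ref{th5}.

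The key steps, in order, are: (i) decompose $S(B,P(y))=\sum_{z\le q<y}\sum_{y\le q'} S(B_{qq'},P(y))$, being careful with the overcount when $q'=q''$ and with the coprimality to $N$ (absorbing $\omega(N)$ terms as before); (ii) apply Theorem \ref{th31} to each $B_{qq'}$ with level $D=x_2^{1/2}/\log^{10}x_2$ and $s_{qq'}=\log(D/qq')/\log y$, using $F(s)=2e^\gamma-s$ valid since $1<s_{qq'}<3$, to get $S(B_{qq'},P(y))\le |B_{qq'}|\,U_N^{(0)}\big(2e^\gamma/\log(D/qq')+298.87\cdot\log^{-3/2}(D/qq')+(5\log^2 y)^{-1}\big)+\sum_{d}|r(d)|$; (iii) bound $|B_{qq'}|$ by $(|A|+\cdots)/(\vph(q)\vph(q'))$ plus a Bombieri–Vinogradov-type error, and control the total error term $\sum_{q,q'}\sum_d |r(d)|$ by Lemma \ref{lm24} (here the double sum over $q,q'$ of the $\log^{-10}$-level error is what forces the $\ep$-dependent loss and yields the harmless $e^{-138}N/(\ep\log^3 N)$; the $\ep$ enters because one splits the bilinear error according to whether $qq'$ exceeds a threshold like $N^{1/2-\ep}$); (iv) convert $\sum_{z\le q<y}\sum_{q'\ge y,\ qq'<N}\frac{1}{(q-1)(q'-1)}\big(\frac{2e^\gamma}{\log(D/qq')}+\cdots\big)$ into the double integral via Lemma \ref{lm21} twice and (\ref{eq23}), evaluating it as $\frac{c_2}{\log N}\big(4e^\gamma+O(\log^{-1/2}N)\big)$ with $c_2$ the Chen-region constant; (v) use (\ref{eq40b}) to replace $U_N^{(0)}$ by $U_N(1+\ep_0(N))$-type factors where $U_N^{(1)}$ appears, collect constants, and conclude.

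The main obstacle is step (iv): estimating the double sum $\sum_{q,q'}\frac{1}{qq'\log(N^{1/2}/qq')}$ explicitly over the region $z\le q<y\le q'$, $qq'<N$. The inner integral in $q'$ runs up to $q'=N/q$ and the factor $1/\log(N^{1/2}/qq')$ blows up as $qq'\to N^{1/2}$ — but note $qq'$ ranges past $N^{1/2}$, so one must treat $\log(D/qq')$ carefully: for $qq'$ close to or exceeding $D$ the sieve bound degenerates, which is exactly why Chen's argument restricts to $q_1q_2p_3<N$ with $p_3$ the largest factor and why only $qq'$ up to roughly $N^{1/2}$ contributes the main term — one splits at $qq'\sim N^{1/2-\ep}$, uses the sieve main term below and a trivial Brun–Titchmarsh bound above, and the region $qq'>N^{1/2-\ep}$ contributes the $\ep$-controlled term. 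Making the constant $c_2$ and the correction $860.16295/\log^{3/2}N$ fully explicit requires tracking the two nested applications of Lemma \ref{lm21} with their boundary terms, the substitution reducing the double integral to $\int_{\log z}^{\log y}\!\!\int du\,dv/\big(uv(\tfrac12\log N-u-v)\big)$ over the appropriate triangle, and bounding $\log D$ below by $\tfrac12\log N-14\log\log N$ as in (\ref{eq56}); these are routine but lengthy, so I would present the reduction and cite the PARI-GP computation for the final numerical constants, exactly as the paper does elsewhere.
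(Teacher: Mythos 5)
Your decomposition is not the paper's and, more importantly, it cannot work as stated. You propose to fix both $p_1=q$ and $p_2=q'$ and sieve each fibre $B_{qq'}=\{N-qq'p_3\}$ with level $D/(qq')$ and $s_{qq'}=\log(D/qq')/\log y$, claiming $1<s_{qq'}<3$. But $qq'\geq zy=N^{1/8+1/3}=N^{11/24}$, so $D/(qq')\leq N^{1/24}$ and $s_{qq'}\leq 1/8<1$: the level of distribution is already exhausted by $qq'$ itself, the sieve upper bound per fibre is far too weak, and the error terms would require equidistribution of $p_3$ in progressions to moduli $dqq'$ up to $N^{1/2}$ with $d$ itself up to $N^{1/24}$ only — Lemma \ref{lm24} cannot deliver the needed saving in this form. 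This is precisely the obstruction that forces Chen's switching/bilinear-form device, which your proposal omits. The paper instead sieves the whole set $B$ (after a geometric decomposition $w_j\leq p_1<(1+\ep)w_j$, $w_j=z(1+\ep)^j$, which decouples the constraint $p_1p_2p_3<N$ into a product structure) with the \emph{full} level $D=N^{1/2}/\log^{10}N$ and $s=\log D/\log y\in(1,3)$, and controls the remainders $r_d^{(j)}=\abs{B_d^{(j)}}-\abs{B^{(j)}}/\vph(d)$ by the bilinear large-sieve estimate of Lemma \ref{lm61}, treating $n=p_2p_3$ as a single rough variable. The $\ep$ in the statement is the ratio of that decomposition (the $1/\ep$ in the error term is the number of intervals $j_0\approx\log(y/z)/\log(1+\ep)$), not a threshold $qq'\sim N^{1/2-\ep}$ as you suggest; your proposed split with Brun--Titchmarsh above the threshold would contribute a term of size a constant times $N/\log^2N$, not an $\ep$-controlled one, since a positive proportion of the Chen region has $qq'>N^{1/2}$.

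Your identification of the constant $c_2$ is also wrong. The integral you write,
\begin{equation*}
\iint\frac{dt_1\,dt_2}{t_1t_2\log t_1\log t_2\log\bigl(N^{1/2}/(t_1t_2)\bigr)},
\end{equation*}
has $\log\bigl(N^{1/2}/(t_1t_2)\bigr)$ negative on much of the region $z\leq t_1<y\leq t_2$, $t_1t_2^2<N$, and does not converge to $c_2$. In the paper, $c_2=\int_{1/8}^{1/3}\int_{1/3}^{(1-\al)/2}\frac{d\beta\,d\al}{\al\beta(1-\al-\beta)}+10^{-8}$ arises purely from counting $\abs{B^\#}\approx\sum_{p_1,p_2}\pi(N/(p_1p_2))$ (Lemma \ref{lm6x1}), with denominator $\log(N/(p_1p_2))$, while the sieve contributes only the flat factor $2e^\gamma/\log D\approx 4e^\gamma/\log N$ applied to $\abs{B^\#}$ as a whole. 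To repair your argument you would need to reintroduce both the bilinear form lemma and the $(1+\ep)$-adic decomposition in $p_1$; at that point you have reconstructed the paper's proof.
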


In order to obtain this upper bound for $S(B, P(y))$, we use 
the following upper bound for a bilinear form:

\begin{lem}\label{lm61}
Let $a(n)$ be an arithmetic function with $\abs{a(n)}\leq 1$ for all $n$.
Let $X, Y, Z>0$ be real numbers with $\log^{10} Y<X\leq \frac{N}{z}, XY<2N$ and $Y, Z>z$.
Moreover, let $D^*=\frac{(XY)^\frac{1}{2}}{\log^{10} Y}$.
Then we have
\begin{equation}
\begin{split}
&\sum_{d<D^*, q_1\nmid d}\max_{(a, d)=1}\abs{\sum_{n<X}\sum_{\substack{Z\leq p<Y,\\ np\equiv a\pmod{d}}}a(n)-\frac{1}{\vph(d)}\sum_{n<X}\sum_{\substack{Z\leq p<Y,\\ (np, d)=1}}a(n)}\\
&\quad \leq\frac{e^{-144}XY}{\log^4 Y}.
\end{split}
\end{equation}
\end{lem}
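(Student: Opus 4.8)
The plan is to reduce the sum over $d<D^*$ to a dispersion-type estimate and then invoke the large sieve, following the standard treatment of bilinear (Type II) sums as in Chapter 9 of \cite{Nat} but keeping every constant explicit. First I would open up the inner difference by characters modulo $d$: for $(a,d)=1$ one has
\begin{equation*}
\sum_{n<X}\sum_{\substack{Z\leq p<Y,\\ np\equiv a\pmod d}}a(n)-\frac{1}{\vph(d)}\sum_{n<X}\sum_{\substack{Z\leq p<Y,\\ (np,d)=1}}a(n)=\frac{1}{\vph(d)}\sum_{\chi\neq\chi_0\pmod d}\bar\chi(a)\left(\sum_{n<X}\chi(n)a(n)\right)\left(\sum_{Z\leq p<Y}\chi(p)\right).
\end{equation*}
Taking absolute values, the maximum over $a$ is at most $\frac{1}{\vph(d)}\sum_{\chi\neq\chi_0}\bigl|\sum_{n<X}\chi(n)a(n)\bigr|\cdot\bigl|\sum_{Z\leq p<Y}\chi(p)\bigr|$, and I would pass from characters mod $d$ to primitive characters mod $e\mid d$ in the usual way, so that summing over $d<D^*$ and using $\sum_{d\le D^*,\,e\mid d}1/\vph(d)\ll (1+\log D^*)/\vph(e)$ (the estimate of Theorem A.17 in \cite{Nat}, already used in Lemma \ref{lm24}) collapses the problem to a sum over primitive $\chi$ mod $e$ with $e<D^*$. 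The condition $q_1\nmid d$ — with $q_1$ the largest prime factor of the exceptional modulus $k_1$ — ensures that the exceptional primitive character (which is induced only from moduli divisible by $k_1$, hence by $q_1$) never appears, so no exceptional term contaminates the bound; this is exactly the role that hypothesis plays, mirroring the remark in the proof of Lemma \ref{lm25}.

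Next I would apply the multiplicative large sieve inequality twice via Cauchy–Schwarz: writing $\alpha_n=\chi(n)a(n)$ for $n<X$ and $\beta_p$ the indicator of primes in $[Z,Y)$, the weighted bilinear sum $\sum_{e<D^*}\frac{1}{\vph(e)}\sum_{\chi\,(e)}^{*}|\sum_n\alpha_n||\sum_p\beta_p|$ is, by Cauchy–Schwarz and the large sieve with moduli up to $D^*$, at most
\begin{equation*}
(1+\log D^*)\left(\sum_{e<D^*}\sideset{}{^*}\sum_{\chi\,(e)}\Bigl|\sum_{n<X}a(n)\chi(n)\Bigr|^2\right)^{1/2}\left(\sum_{e<D^*}\sideset{}{^*}\sum_{\chi\,(e)}\Bigl|\sum_{Z\le p<Y}\chi(p)\Bigr|^2\right)^{1/2}\ll(1+\log D^*)\bigl((D^{*2}+X)X\bigr)^{1/2}\bigl((D^{*2}+Y)\pi(Y)\bigr)^{1/2}.
\end{equation*}
Here the crucial choice $D^*=(XY)^{1/2}/\log^{10}Y$ makes $D^{*2}=XY/\log^{20}Y$, which together with the hypotheses $\log^{10}Y<X$, $XY<2N$, $Y,Z>z=N^{1/8}$ forces both bracketed factors to be dominated by their main pieces up to the stated power of $\log Y$; a short computation then yields a bound of the shape $C\,XY/\log^{4}Y$ for an absolute constant $C$. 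Finally I would track $C$ explicitly through the large-sieve constant (using the clean form $D^{*2}+Q\le$ the standard $N+Q$ bound, as $D^*$ is a length and $X,Y$ lengths), the $\pi(Y)\le 1.3\,Y/\log Y$ estimate from \cite{RS1}, and the value of $z$, arranging (as the author evidently does) the numerology so that $C<e^{-144}$ once $N>\exp\exp 36$; the $\log^{20}Y$ savings from squaring $D^*$ give enormous room, so the only real care is that all the logarithmic factors $(1+\log D^*)$, $(\log Y)^{-1}$ from $\pi(Y)$, and the fourth power in the target are balanced correctly.

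The main obstacle, and the only genuinely delicate point, is the bookkeeping that converts the bilinear large-sieve bound — naturally expressed with $\log Y$, $\log X$, $\log XY$ intertwined — into the single clean denominator $\log^4 Y$ demanded in the statement, while simultaneously showing the resulting absolute constant is below $e^{-144}$; one must use $XY<2N$ and $Y>N^{1/8}$ to compare $\log X$, $\log XY$ with $\log Y$ (so that, e.g., $\log XY<9\log Y$ is far from tight but $\log X$ could be as large as $\log N\approx 8\log Y$), and then verify numerically, presumably with the PARI-GP script cited in Section \ref{intro}, that the accumulated constant clears the threshold. Everything else is a routine, if lengthy, application of Cauchy–Schwarz and the large sieve; the hypothesis $q_1\nmid d$ is what keeps the argument free of the exceptional character throughout.
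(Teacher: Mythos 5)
Your reduction to primitive characters and your reading of the hypothesis $q_1\nmid d$ (it forces $k_1\nmid d$, so the exceptional primitive character never occurs) are both consistent with the paper, but the central analytic step fails: you apply Cauchy--Schwarz and the large sieve \emph{once}, over the entire range of conductors $e<D^*$, and claim that $(1+\log D^*)\bigl((D^{*2}+X)X\bigr)^{1/2}\bigl((D^{*2}+Y)\pi(Y)\bigr)^{1/2}$ reduces to $C\,XY/\log^4 Y$. It does not. The product of the two bracketed factors contains $D^{*4}=(XY)^2/\log^{40}Y$, so your bound is at least of order $D^{*2}\bigl(X\pi(Y)\bigr)^{1/2}\approx \frac{XY}{\log^{20}Y}\cdot\frac{(XY)^{1/2}}{\log^{1/2}Y}$, which exceeds the target $e^{-144}XY/\log^4 Y$ by a factor of roughly $(XY)^{1/2}/\log^{16.5}Y\geq N^{1/16}/\log^{17}N$ --- astronomically large. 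The point of the weight $1/\vph(e)$ is that it must be used, not discarded: the paper splits the conductors $D_0\leq r<D^*$ (with $D_0=\log^{10}Y$) into dyadic blocks $D_1\leq r<2D_1$, pulls the factor $1/\vph(r)\approx 1/D_1$ out of each block \emph{before} applying the large sieve, obtains per block a bound of the shape $D_2\sqrt{XY}+4XY/\log^{10}Y+XY/D_2$, and only then sums over the $O(\log D^*)$ blocks. That is what converts the fatal $D^{*2}$ into the harmless $D^*\sqrt{XY}=XY/\log^{10}Y$.

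Separately, your argument says nothing about conductors below $\log^{10}Y$, where the large sieve gives no saving at all (for an individual small modulus it is essentially trivial). The paper treats this range by a different tool: the explicit estimate $\sum^*_{\chi\pmod r}\abs{\pi(x;\chi)}<10^{-4}x/\log^{10}x$ from Theorem 1.1 of \cite{Ymd1}, and it is precisely here --- not in the large-sieve range --- that the nonexceptionality guaranteed by $q_1\nmid d$ is actually needed. So two ingredients are missing: the separate small-conductor estimate via the zero-free-region input, and the dyadic decomposition exploiting the $1/\vph$ weight for the large conductors. With both in place the numerology closes as in the paper; without them no choice of constants can rescue the computation.
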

\begin{proof}
Factoring $\chi=\chi_{0, s}\chi_1$ with $d=rs$, the bilinear form is
\begin{equation}
\begin{split}
&\sum_{\substack{rs<D^*,\\ q_1\nmid rs}}\sideset{}{^*}\sum_{\chi\pmod{r}}\abs{\sum_{\substack{n<X,\\ (n, s)=1}}a(n)\chi(n)}\abs{\sum_{Z\leq p<Y, p\nmid s}\chi(p)}\\
\leq &\sum_{\substack{s<D^*,\\ q_1\nmid s}}\sum_{\substack{r<D^*,\\ q_1\nmid r}}\sideset{}{^*}\sum_{\chi\pmod{r}}\abs{\sum_{\substack{n<X,\\ (n, s)=1}}a(n)\chi(n)}\abs{\sum_{Z\leq p<Y, p\nmid s}\chi(p)}.
\end{split}
\end{equation}

We begin by estimating the inner sum restricted to $r<D_0$, where $D_0=\log^{10} Y$.
Since $r$ is nonexceptional, as we derived Corollary \ref{cor231} from Lemma \ref{lm23},
we derive from Theorem 1.1 of \cite{Ymd1}
that $\sideset{}{^*}\sum_{\chi\pmod{r}}\abs{\pi(x; \chi)}<\frac{10^{-4}x}{\log^{10} x}<\frac{e^{-212}x}{\log^4 x}$ for $x\geq z\geq x_0^{1/8}$.
Thus we have
\begin{equation}
\begin{split}
&\sideset{}{^*}\sum_{\chi\pmod{r}}\abs{\sum_{Z\leq p<Y, p\nmid s}\chi(p)}\\
\leq &\sideset{}{^*}\sum_{\chi\pmod{r}}\abs{\pi(Y; \chi)-\pi(Z; \chi)}+\omega(s)\\
<&\frac{e^{-212}Y}{\log^4 Y}+\frac{e^{-212}Z}{\log^4 Z}+\frac{1.3841\vph(r)\log D^*}{2\log\log D^*}\\
<&\frac{2e^{-212}Y}{\log^4 Y}+\frac{1.3841\vph(r)\log D^*}{2\log\log D^*}.
\end{split}
\end{equation}
Now we can see that the inner sum restricted to $r<D_0$ is
\begin{equation}\label{eq61}
\begin{split}
<&\frac{2e^{-212}Y}{\log^4 Y}\sum_{r<D_0}\frac{1}{\vph(r)}+\frac{1.3841D_0\log D^*}{2\log\log D^*}\\
<&\frac{4e^{-212}Y\log D_0}{\log^4 Y}+\frac{1.3841D_0\log D^*}{2\log\log D^*}.\\
<&\frac{e^{-210}Y\log D_0}{\log^4 Y},
\end{split}
\end{equation}
where the last inequality follows from fact that $D_0<\log^{10} Y$ and $\log D^*<\log X<\log N=8\log z<8\log Y$.

In order to estimate the inner sum restricted to $D_0\leq r<D^*$,
we divide the interval into intervals of the form $D_1\leq r<2D_1$, where $D_1=2^kD_0 (0\leq k\leq \frac{\log(D^*/D_0)}{\log 2})$
and use Cauchy's inequality and the large-sieve inequality.
We have, for each $D_1$,
\begin{equation}
\begin{split}
&\sum_{\substack{D_0\leq r<D^*,\\ D_1\leq r<2D_1,\\ q_1\nmid r}}\sideset{}{^*}\sum_{\chi\pmod{r}}\abs{\sum_{\substack{n<X,\\ (n, s)=1}}a(n)\chi(n)}\abs{\sum_{Z\leq p<Y, p\nmid s}\chi(p)}\\
\leq & \frac{1}{D_2}\left(\sum_r \sideset{}{^*}\sum_{\chi\pmod{r}}\frac{r}{\vph(r)}\abs{\sum_{\substack{n<X,\\ (n, s)=1}}a(n)\chi(n)}^2\right)^\frac{1}{2}\\
& \times \left(\sum_r \sideset{}{^*}\sum_{\chi\pmod{r}}\frac{r}{\vph(r)}\abs{\sum_{Z\leq p<Y, p\nmid s}\chi(p)}^2\right)^\frac{1}{2} \\
\leq & \frac{1}{D_2}\left((D_2^2+X)(D_2^2+Y)XY\right)^\frac{1}{2}\\
\leq & \sqrt{XY}(D_2+2\sqrt{X+Y})+\frac{XY}{D_2}\\
\leq & D_2\sqrt{XY}+\frac{4XY}{\log^{10} Y}+\frac{XY}{D_2},
\end{split}
\end{equation}
where $D_2$ is the number of integers $r$ with $\max\{D_0, D_1\}\leq r<\min\{2D_1, D^*\}$,
and summming these quantities over $0\leq k\leq \frac{\log(D^*/D_0)}{\log 2}$, we obtain
\begin{equation}
\begin{split}
&\sum_{\substack{D_0\leq r<D^*,\\ q_1\nmid r}}\sideset{}{^*}\sum_{\chi\pmod{r}}\abs{\sum_{\substack{n<X,\\ (n, s)=1}}a(n)\chi(n)}\abs{\sum_{Z\leq p<Y, p\nmid s}\chi(p)}\\
<& 2D^* \sqrt{XY}+\left(\frac{4\log D^*}{\log 2}+2\right)\frac{XY}{D_0}\\
<& \frac{4XY}{\log^{10} Y}+\frac{2XY\log N}{(\log 2)\log^{10} Y}<\frac{2.9XY\log N}{\log^{10} Y}.
\end{split}
\end{equation}
Summing over $s$ is only to multiply by $\sum_s\frac{1}{\vph(s)}<2\log D^*<\log N=8\log z<8\log Y$,
so that the contribution is at most $\frac{2e^{-145}XY}{\log^4 Y}$.  Combining this estimate
with (\ref{eq61}), we obtain the result.
\end{proof}

The rest of this section is devoted to the proof of Theorem \ref{th6}.
Let
\begin{equation}
B^{(j)}=\{N-p_1p_2p_3: z\leq p_1<y\leq p_2\leq p_3, up_2p_3<N, (p_2p_3, N)=1, w_j\leq p_1<(1+\ep)w_j\},
\end{equation}
where
\begin{equation}
w_j=z(1+\ep)^j\textrm{ for }j=0, 1, \cdots, j_0=\ceil{\frac{\log(y/z)}{\log(1+\ep)}}-1,
\end{equation}
and
$B^\#=\cup_j B^{(j)}$.

We can easily see that
$\abs{B^{(j)}}=(\pi(Y)-\pi(Z))\#\{(p_2, p_3): y\leq p_2\leq p_3, w_j p_2p_3<N, (p_2p_3, N)=1\}$,
where $Z=\max\{z, w_j\}$ and $Y=\min\{(1+\ep)w_j, y\}$,
and
\begin{equation}\label{eq62}
S(B, P^{(1)}(y))\leq S(B^\#, P^{(1)}(y))\leq\sum_{j=0}^{j_0} S(B^{(j)}, P^{(1)}(y)).
\end{equation}

\begin{lem}\label{lm6x1}
\begin{equation}
\abs{B^\#}<\frac{c_2(1+\ep)N}{\log N}+\frac{2.207(1+\ep)N}{\log^2 N},
\end{equation}
where $c_2=\int_{1/8}^{1/3}\frac{\log(2-3\beta)}{\beta(1-\beta)}+10^{-8}<0.36309$.
\end{lem}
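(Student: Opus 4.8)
The plan is to bound $\abs{B^\#}$ by a double sum over the primes $p_2,p_3$ in which the number of admissible $p_1$ enters as a weight, and then to evaluate the resulting sum by two applications of the partial‑summation inequality Lemma \ref{lm21}, using the Mertens‑type estimate (\ref{eq23}) and an explicit upper bound for $\pi(x)$. Since $\abs{B^\#}\le\sum_{j=0}^{j_0}\abs{B^{(j)}}$ and, writing $h(w):=\#\{(p_2,p_3):y\le p_2\le p_3,\ wp_2p_3<N\}$, the displayed formula for $\abs{B^{(j)}}$ gives (read as an inequality, to allow for coincidences among the integers $N-p_1p_2p_3$) $\abs{B^{(j)}}\le\bigl(\pi(Y_j)-\pi(Z_j)\bigr)h(w_j)$ with $Z_j=w_j$ and $Y_j=\min\{(1+\ep)w_j,y\}$; and since the half‑open blocks $[Z_j,Y_j)$ tile $[z,y)$, the total is $\sum_{z\le p_1<y}h(w_{j(p_1)})$, where $w_{j(p_1)}$ denotes the left endpoint of the block containing $p_1$. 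As $w_{j(p_1)}>p_1/(1+\ep)$, any pair counted by $h(w_{j(p_1)})$ satisfies $p_2p_3<N/w_{j(p_1)}<(1+\ep)N/p_1$, so
\[
\abs{B^\#}\le\sum_{z\le p_1<y}\#\{(p_2,p_3):y\le p_2\le p_3,\ p_2p_3<(1+\ep)N/p_1\}.
\]
This step is the source of the factor $(1+\ep)$ in the statement.

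Next I would fix $p_1$, set $M=(1+\ep)N/p_1$, and put $\beta=\log p_1/\log N\in[1/8,1/3]$, so that $\log M=(1-\beta)\log N+\log(1+\ep)$ and $\log y=\tfrac13\log N$. Since $p_2\le p_3$ forces $p_2<\sqrt M$, the inner count is at most $\sum_{y\le p_2<\sqrt M}\pi(M/p_2)$, the omission of the subtractive term $\pi(p_2)$ costing only $O(M/\log^2M)$. Using an explicit bound $\pi(t)<\tfrac{t}{\log t}\bigl(1+c/\log t\bigr)$ from \cite{Dus} (whose relative error is negligible because $M/p_2\ge\sqrt M\ge N^{1/3}$), and then Lemma \ref{lm21} with $c(n)=1/n$ on primes and $0$ otherwise (the hypothesis furnished by (\ref{eq23})), $g(t)=\log\log t$, and the monotone $f(t)=1/\log(M/t)$, I expect
\[
\sum_{y\le p_2<\sqrt M}\pi(M/p_2)\le M\int_y^{\sqrt M}\frac{dt}{t\log t\,\log(M/t)}+O\!\Bigl(\frac{M}{\log^2M}\Bigr)=\frac{M}{\log M}\log\!\Bigl(\frac{\log M}{\log y}-1\Bigr)+O\!\Bigl(\frac{M}{\log^2M}\Bigr).
\]
Since $\frac{\log M}{\log y}-1=2-3\beta+O(1/\log N)$ and $2-3\beta\ge1$ on the range, the main term equals $\frac{(1+\ep)N}{p_1}\cdot\frac{\log(2-3\beta)}{(1-\beta)\log N}$ up to an additive $O\!\bigl(N/(p_1\log^2N)\bigr)$.

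Finally I would sum over $p_1$ and apply Lemma \ref{lm21} once more, now with $g(t)=\log\log t$, $c(n)=1/n$ on primes, and $f(t)=\frac{\log(2-3\log t/\log N)}{1-\log t/\log N}$ (monotone on $[z,y)$, or split at its maximum into monotone pieces); together with the substitution $t=N^\beta$ this yields
\[
\sum_{z\le p_1<y}\frac{1}{p_1}\cdot\frac{\log(2-3\beta(p_1))}{1-\beta(p_1)}\le\int_z^y\frac{\log(2-3\log t/\log N)}{1-\log t/\log N}\cdot\frac{dt}{t\log t}+O\!\Bigl(\frac{1}{\log^2N}\Bigr)=\int_{1/8}^{1/3}\frac{\log(2-3\beta)}{\beta(1-\beta)}\,d\beta+O\!\Bigl(\frac{1}{\log^2N}\Bigr),
\]
while $\sum_{z\le p_1<y}\frac{1}{p_1\log^2N}=O(1/\log^2N)$ by (\ref{eq23}). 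Collecting the leading contribution as $\frac{c_2(1+\ep)N}{\log N}$ — the $10^{-8}$ in the definition of $c_2$ absorbing the (extremely small) relative slack in the $\pi$‑estimate and the corrections of size $O(\log(1+\ep)/\log N)$, together with the error of the numerical integration — and folding all secondary contributions into $\frac{2.207(1+\ep)N}{\log^2N}$ gives the lemma; the value $c_2<0.36309$ is the PARI‑GP evaluation of $\int_{1/8}^{1/3}\frac{\log(2-3\beta)}{\beta(1-\beta)}\,d\beta+10^{-8}$.

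I expect the main obstacle to be bookkeeping rather than anything conceptual: one must propagate every explicit constant through two nested applications of Lemma \ref{lm21} and through the double integral, verify the monotonicity hypothesis of that lemma on each range (in particular decide whether the outer $f$ is monotone on all of $[z,y)$ or must be cut at its maximum), keep track of the endpoint terms $E\max\{f(w),f(z)\}$ coming from (\ref{eq23}), and then confirm that the accumulated secondary error is genuinely $\le\frac{2.207(1+\ep)N}{\log^2N}$ and that the slack in the main term stays below $10^{-8}$ — which is precisely what the numerical verification is designed to certify.
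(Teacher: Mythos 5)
Your proposal is correct and follows essentially the same route as the paper: both reduce $\abs{B^\#}$ to $\sum_{p_1}\sum_{p_2}\pi\bigl((1+\ep)N/(p_1p_2)\bigr)$, apply Lemma \ref{lm21} together with (\ref{eq23}) once for the $p_2$-sum and once for the $p_1$-sum, and identify the resulting double integral with $c_2/\log N$ (the paper packages the inner integral as a function $H(u)$ and quotes Nathanson for $\int_z^y H(u)\,d\log\log u=c_2/\log N$, whereas you evaluate it in closed form as $\frac{\log(2-3\beta)}{(1-\beta)\log N}$ — the same quantity). The only bookkeeping difference is in how the endpoint terms of Lemma \ref{lm21} are collected into the $2.207(1+\ep)N/\log^2 N$ error, which the paper does via the explicit bounds $h_{p_1}(w)/(4\log^2 y)\leq 9(1+e^{-50})/(4\log^2 N)$ and $\sum_{z\leq p<y}1/p<\log(8/3)+1/(4\log^2 z)$.
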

\begin{proof}
We begin by
\begin{equation}\label{eq6x1}
\begin{split}
\abs{B^\#}&\leq\sum_{\substack{z\leq p_1<y\leq p_2,\\ p_1p_2^2<(1+\ep)N}}\pi\left(\frac{(1+\ep)N}{p_1p_2}\right)\\
&\leq (1+\ep+10^{-8})N\sum_{z\leq p_1<y}\frac{1}{p_1}\sum_{y\leq p_2<w}\frac{1}{p_2\log\frac{N}{p_1p_2}},
\end{split}
\end{equation}
where $w=w(p_1)=\sq{\frac{(1+\ep)N}{p_1}}$.

As in p. 289 in \cite{Nat}, we introduce two functions
$h_p(t)=\frac{1}{\log\frac{N}{pt}}$ and
\begin{equation}
H(u)=\int_y^{(N/u)^\frac{1}{2}}h_u(t)d\log\log t=\int_y^{(N/u)^\frac{1}{2}}\frac{d\log\log t}{\log\frac{N}{ut}}.
\end{equation}
We see that
\begin{equation}
H(N^\al)=\int_{N^\frac{1}{3}}^{N^{\frac{1-\al}{2}}}\frac{d\log\log t}{\log\frac{N^{1-\al}}{t}}
=\frac{1}{\log N}\int_{\frac{1}{3}}^{\frac{1-\al}{2}}\frac{d\beta}{\beta(1-\al-\beta)}.
\end{equation}
In particular, we have $h_{p_1}(w)=\frac{2}{\log\frac{N}{(1+\ep)p_1}}$, $H(y)=0$ and $H(z)=\frac{\log\frac{26}{21}}{\log N}$.

Now Lemma \ref{lm21} and (\ref{eq23}) give that the inner sum in (\ref{eq6x1}) is 
\begin{equation}
\sum_{y\leq p_2<x}\sum\frac{h_{p_1}(p_2)}{p_2}\leq H(p_1)+\frac{h_{p_1}(w)}{4\log^2 y}.
\end{equation}
Since 
\begin{equation}
\begin{split}
\frac{h_{p_1}(w)}{4\log^2 y}=& \frac{1}{2\log\frac{N}{(1+\ep)p_1}\log y} \\
\leq & \frac{1}{2\log\frac{N}{(1+\ep)y}\log y} \\
= & \frac{1}{2\log\frac{N^\frac{2}{3}}{1+\ep}\log N^\frac{1}{3}} \\
\leq & \frac{9}{4\log^2 N}\times \frac{\frac{2}{3}\log N}{\frac{2}{3}\log N-\log(1+\ep)} \\
\leq & \frac{9(1+e^{-50})}{4\log^2 N},
\end{split}
\end{equation}
we have
\begin{equation}
\sum_{y\leq p_2<x}\sum\frac{h_{p_1}(p_2)}{p_2}\leq H(p_1)+\frac{9(1+e^{-50})}{4\log^2 N}.
\end{equation}

Hence the outer sum in (\ref{eq6x1}) is at most
\begin{equation}
\sum_{z\leq p_1<y}\frac{H(p_1)}{p_1}+\frac{9(1+e^{-50})}{4\log^2 N}\sum_{z\leq p_1<y}\frac{1}{p_1}
<\sum_{z\leq p_1<y}\frac{H(p_1)}{p_1}+\frac{2.2069}{\log^2 N}
\end{equation}
since it immediately follows from (\ref{eq23}) that $\sum_{z\leq p<y}(1/p)<\log(8/3)+\frac{1}{4\log^2 z}$.

Using Lemma \ref{lm21} and (\ref{eq23}) again and exploiting the fact
$\int_z^y H(u)d\log\log u=\frac{c_2}{\log N}$ in p. 291 of \cite{Nat}, we have
\begin{equation}
\begin{split}
\sum_{z\leq p_1<y}\frac{H(p_1)}{p_1}<& \int_z^y H(u)d\log\log u+\frac{\log\frac{26}{21}}{4\log^2 z\log N}\\
=& \frac{c_2}{\log N}+\frac{16\log\frac{26}{21}}{\log^3 N}.
\end{split}
\end{equation}
This proves the lemma.
\end{proof}

We set $D=\frac{N^\frac{1}{2}}{\log^{10} N}$ and
let $s=\frac{\log D}{\log y}$.  Then Theorem \ref{th31} gives
\begin{equation}
\begin{split}
& S(B^{(j)}, P^{(1)}(y))\\
\leq & \abs{B^{(j)}}\left(V^{(1)}(y)+U_N^{(1)}\left(\frac{F(s)}{\log D}+\frac{298.87013}{\log^\frac{3}{2}D}\right)\right)+\sum_{d<D, d\mid P^{(1)}(y)}\abs{r^{(j)}_d},
\end{split}
\end{equation}
where $r^{(j)}_d=\abs{B^{(j)}_d}-\frac{\abs{B^{(j)}}}{\vph(d)}$.
By Lemma \ref{lm22}, we have $V^{(1)}(y)<\frac{U_N^{(1)}}{\log y}\left(1+\frac{1}{5\log^2 y}\right)$.
We can see that $1<s_w<3$ and therefore $F(s_l)=2e^\gamma-s_w$.
Thus, similarly to (\ref{eq53}), we have, for each $j$,
\begin{equation}
\begin{split}
& S(B^{(j)}, P^{(1)}(y))\\
\leq & \abs{B^{(j)}}U_N^{(1)}\left(\frac{2e^\gamma}{\log D}+\frac{298.87013}{\log^\frac{3}{2}D}+\frac{1}{5\log^2 y}\right)+R^{(j)}
\end{split}
\end{equation}
and therefore, by (\ref{eq62}),
\begin{equation}\label{eq63}
\begin{split}
& S(B, P^{(1)}(y))\\
\leq & \abs{B^\#}U_N^{(1)}\left(\frac{2e^\gamma}{\log D}+\frac{298.87013}{\log^\frac{3}{2}D}+\frac{1}{5\log^2 y}\right)+R,
\end{split}
\end{equation}
where $R^{(j)}=\sum_{d<D, d\mid P^{(1)}(y)}\abs{r^{(j)}_d}$ and $R=\sum_{j=0}^{j_0} R^{(j)}$.

We put $a(n)=a_N(n)$ to be the characteristic function of the set of integers of the form $n=p_2p_3$
with $y\leq p_2<p_3$ and $(N, p_2p_3)=1$.  Then, noting that $(d, p_2p_3)=1$ since $d\mid P(y)$, we see that
\begin{equation}
\begin{split}
r^{(j)}_d=& \sum_{n<N/w_j}\sum_{\substack{Z\leq p<Y,\\ np\equiv N\pmod{d}}}a(n)-\frac{1}{\vph(d)}\sum_{n<N/w_j}\sum_{Z\leq p<Y}a(n)\\
=& r^{(j, 1)}_d+r^{(j, 2)}_d,
\end{split}
\end{equation}
where
\begin{equation}
r^{(j, 1)}_d=\sum_{n<N/w_j}\sum_{\substack{Z\leq p<Y,\\ np\equiv N\pmod{d}}}a(n)-\frac{1}{\vph(d)}\sum_{n<N/w_j}\sum_{\substack{Z\leq p<Y,\\ (np, d)=1}}a(n)
\end{equation}
and
\begin{equation}
r^{(j, 2)}_d=\frac{1}{\vph(d)}\sum_{n<N/w_j}\sum_{\substack{Z\leq p<Y,\\ p\mid d}}a(n).
\end{equation}
Now we can divide $R\leq R_1+R_2$,
where $R_i=\sum_{j=0}^{j_0}\sum_{d<D, d\mid P^{(1)}(y)}\abs{r^{(j, i)}_d}$ for $i=1, 2$.

In order to estimate $R_1$, we shall apply Lemma \ref{lm61} with $X=N/w_j, Y=\min\{y, (1+\ep)_j\}, Z=\max\{w_j, z\}$ and $a=N$.
We see that $D<D^*=\frac{(XY)^\frac{1}{2}}{\log^{10} Y}<(XY)^\frac{1}{2}<N$ and therefore
\begin{equation}
\begin{split}
& \sum_{d<D, d\mid P(y)}\abs{r^{(j, 1)}_d}\\
\leq & \sum_{d<D^*, d\mid P(y)}\abs{r^{(j, 1)}_d}\\
= & \sum_{d<D^*, d\mid P(y)}\abs{\sum_{n<N/w_j}\sum_{\substack{Z\leq p<Y,\\ np\equiv N\pmod{d}}}a(n)-\frac{1}{\vph(d)}\sum_{n<N/w_j}\sum_{\substack{Z\leq p<Y,\\ (np, d)=1}}a(n)}\\
< & \frac{e^{-144}(1+\ep)N}{\log^4 y}
\end{split}
\end{equation}
for each $j=0, 1, \ldots, j_0$.
Hence, noting that $\ep<1/100$, we see that $R_1$ is at most
\begin{equation}\label{eq64}
\begin{split}
\sum_{j=0}^{j_0}\sum_{d<D, d\mid P(y)}\abs{r^{(j, 1)}_d}&<\frac{(1+\ep)e^{-144}N\log\frac{y}{z}}{\log(1+\ep) \log^4 y}\\
&<\frac{e^{-139}N}{\ep \log^3 N}.
\end{split}
\end{equation}

Next, $R_2$ is at most
\begin{equation}\label{eq65}
\begin{split}
\sum_{j=0}^{j_0}\sum_{d<D, d\mid P(y)} r^{(j, 2)}_d=& \frac{1}{\vph(d)}\sum_{n<N/l}\sum_{\substack{Z\leq p<Y,\\ p\mid d}}a(n)\\
\leq& 8N^\frac{7}{8}\sum_{d<D, d\mid P(y)}\frac{1}{\vph(d)}\\
<& 8N^\frac{7}{8}\log N\\
<& \frac{e^{-139}N}{\log^3 N}
\end{split}
\end{equation}
since we can see that
\begin{equation}
\begin{split}
\sum_{j=0}^{j_0} r^{(j, 2)}_d=& \frac{1}{\vph(d)}\sum_{n<N/w_j}\sum_{\substack{Z\leq p<Y,\\ p\mid d}}a(n)\\
=& \frac{1}{\vph(d)}\sum_{n<N/w_j}\sum_{p>z, p\mid d}a(n)\\
\leq& \frac{N\log d}{\vph(d)l\log z}\leq\frac{8N^\frac{7}{8}}{\vph(d)}.
\end{split}
\end{equation}

Using (\ref{eq64}) and (\ref{eq65}), we have
\begin{equation}\label{eq66}
\begin{split}
& S(B, P^{(1)}(y))\\
\leq & \abs{B^\#}U_N^{(1)}\left(\frac{2e^\gamma}{\log D}+\frac{298.87013}{\log^\frac{3}{2}D}+\frac{1}{5\log^2 y}\right)+\frac{2e^{-139}N}{\ep \log^3 N}.
\end{split}
\end{equation}

Now Lemma \ref{lm6x1} gives
$\abs{B^\#}<\frac{c_2(1+\ep)N}{\log N}+\frac{2.207(1+\ep)N}{\log^2 N}$.
Since $\log D=\frac{1}{2}\log N-10\log\log N$, we have
$1/\log D-2/\log N<20 \log\log N/(\log N-20\log\log N)<e^{-10}/\log^\frac{3}{2} N$
and therefore the right-hand side in (\ref{eq66}) is at most
\begin{equation}
\frac{c_2(1+\ep)NU_N^{(1)}}{\log N}\left(\frac{4e^\gamma}{\log N}+\frac{845.33255}{\log^\frac{3}{2} N}\right)+\frac{2e^{-139}N}{\ep \log^3 N}.
\end{equation}

We recall the inequality (\ref{eq40b}) and obtain
\begin{equation}
\begin{split}
& S(B, P^{(1)}(y))\\
\leq & 
\frac{c_2(1+\ep)NU_N}{\log^2 N}
\left(4e^\gamma(1+\ep_0(N))+\frac{860.16295}{\log^\frac{3}{2} N}\right)+\frac{2e^{-139}N}{\ep \log^3 N}.
\end{split}
\end{equation}
Since trivially $S(B, P^{(1)}(y))\leq S(B, P(y))$, we obtain Theorem \ref{th6}.

\section{Proof of the main theorem}
We recall that
\begin{equation}
V^{(j)}(x)=\frac{U^{(j)}_N}{\log x}\left(1+\frac{\theta}{5\log^2 x}\right)\left(1+\frac{8\theta\log x}{x}\right)
\end{equation}
for $j=0, 1, \ldots, l$ and $x\geq z$, where
\begin{equation}
U_N^{(j)}=2e^{-\gamma}\prod_p\left(1-\frac{1}{(p-1)^2}\right)\prod_{p>2, p\mid Nm_j}\frac{p-1}{p-2}.
\end{equation}

Now we shall take $e^{-100}<\ep<e^{-20}$ and apply Lemma \ref{lm41} combined with
Theorems \ref{th4}, \ref{th5} and \ref{th6}, which gives
\begin{equation}
\begin{split}
\pi_2(N)\frac{\log N}{U_N\abs{A}}>
&e^\gamma(4\log 3-2\log 6-2c_2(1+\ep))\\
& -\ep_0(N)(2e^\gamma(c_2(1+\ep)+\log 6)+0.5198)\\
& -\frac{767.7471+496.6254+430.0815c_2(1+\ep)}{\log^\frac{1}{2} N}-\frac{1}{\log N}.
\end{split}
\end{equation}

Since $\ep_0(N)\leq 1/57$ and $\ep<e^{-20}$, we have
\begin{equation}
\pi_2(N)\frac{\log N}{U_N\abs{A}}>0.007.
\end{equation}
As mentioned in (\ref{eq41a}), we have $\abs{A}>\frac{N}{\log N}$.
This completes the proof of the main theorem.

{}

{\small Tomohiro Yamada}\\
{\small Center for Japanese language and culture\\Osaka University\\562-8558\\8-1-1, Aomatanihigashi, Minoo, Osaka\\Japan}\\
{\small e-mail: \protect\normalfont\ttfamily{tyamada1093@gmail.com}}
\end{document}